\newtheorem{definicion}{Definition}[section]
\newtheorem{proposition}[definicion]{Proposition}
\newtheorem{theorem}[definicion]{Theorem}
\newtheorem{lemma}[definicion]{Lemma}
\newtheorem{remark}[definicion]{Remark}
\newtheorem{definition}[definicion]{Definition}
\newenvironment{proof}{ \textbf{Proof}:}{\hfill}
\begin{document}
\begin{frontmatter}
\title{Weak Reproductive Solutions for a
Convection-Diffusion Model Describing a Binary Alloy Solidification Processes}
\author[espana]{Blanca Climent-Ezquerra \thanksref{blanca}},
\thanks[blanca]{Partially supported by Ministerio de Ciencia, Innovaci\'on y Universidades, Grant PGC2018-098308-B-I00, with the collaboration of FEDER.}
\author[chile]{Mario Dur\'an \thanksref{mario}},
\thanks[mario]{Supported by FONDECYT-Chile
grant No 1040205 and No 7060025.}
\author[chile1]{Elva Ortega-Torres \thanksref{mario}},
\author[chile2]{Marko Rojas-Medar \thanksref{marko}}
\thanks[marko]{Supported by of MATH-AMSUD project 21-MATH-03 (CTMicrAAPDEs), CAPES-PRINT 88887.311962/2018-00 (Brazil), Project UTA-Mayor, 4753-20, Universidad de Tarapac\'a .}
\address[espana]{Dpto. EDAN, Facultad de Matem\'aticas, Universidad de Sevilla, Sevilla, Espana. E-mail: bcliment@us.es}
\address[chile]{Departamento de Ingenier\'\i a Matem\'atica, Universidad de Concepci\'on, Concepci\'on, Chile.
E-mail: maduran@udec.cl}
\address[chile1]{Departamento de Matem\'aticas, Universidad Cat\'olica del Norte, Antofagasta, Chile. E-mail: eortega@ucn.cl}
\address[chile2]{Departamento de Matem\'atica, Universidad de Tarapac\'a, Arica, Chile. 
E-mail: marko.medar@gmail.com}
\begin{abstract}
We study the existence of reproductive weak solutions for a system
of equations describing a solidification process of a binary alloy
confined into a bounded and regular domain in $\mathbb{R}^3$, having
mixed boundary conditions.
\end{abstract}
\begin{keyword}
Reproductive solution, Solidification process, Navier-Stokes equations.

{\it  Mathematics Subject Classification:} MSC 35Q35 · MSC 35K51 · MSC 76D03.

\end{keyword}
\end{frontmatter}

\section{Introduction}
The modeling and mathematical study of the behavior of metal alloys during the solidification 
process has attracted the attention of a great number of researchers and engineers in the last 
few years. The complex dynamics which unfold in this process have led to the development of acute 
theories which try to explain the advance of the solidification front on a macroscopic level, as 
well as a describe on a microscopic level the dendritic growth in the solid state zone. From a 
thermodynamic point of view we can point out the rigorous work of Hills \& al. \cite{Hi-Lo-Ro}, 
and from the point of view of a simple modeling, Rappaz \& Voller \cite{Ra-Vo}, who obtain results 
based on the properties of thermodynamic averages. From a numeric stance, some implemented software 
packages partially solve the evolving problem of the solidification of a multi component alloy. We 
can mention programs by Ahmad \cite{Ah} and Voller \& Prakash  \cite{Vo-Pr} in the area of Finite 
Elements, and Combeau \& Lesoult \cite{Co-Le} with respect to Finite Volumes.

From a theoretic point of view the question of existence, uniqueness, asymptotic or periodic behavior 
of solution for the evolving model remains basically open, since we are dealing with a degenerate
parabolic and free boundary problem of high mathematical complexity.
Some preliminaries works are Berm\'udez \& Saguez [5], Donnelly [7] and Luckhaus \& Visintin [19]. For 
the associated stationary model, about the existence of solution, uniqueness and computing aspects,
we can mention the works by Blanc \& al. \cite{Bl-Gas-Ra}, Duran \& al. \cite{Du-Or-St}, Duran \& al. 
\cite{Du-Or-Ra},and Badillo \& al. \cite{Ba-Du-Or}, Gasser \cite{Gas}, Gaillard \& Rappaz \cite{Ga-Ra}, 
among others.
It is known that certain dynamical systems may not have periodic solutions because there exist many orbits, 
or branches of bifurcations, that can be randomly reached by the solution. However, several of these systems 
are still of the {\it reproductive type}, in the sense that there exist at least two different times where 
the solution takes the same value.

In this paper, we deal with the existence of weak reproductive solutions of the system modeling  the 
solidification process.  It is organized as follows: In section 2, we state the mathematical model of 
solidification for a binary alloy. In section 3, we state some notations and known inequalities, and we define 
the notion of reproductive weak solution. In section 4, we study a regularized problem associated to the 
solidification model. In section 5, we prove the existence of a reproductive weak solutions of the regularized 
problem studied in Section 4. The results of Section 5 are employed in Section 6, where we prove the main
result about the existence of reproductive weak solution, which is obtained as the limit of a sequence of 
reproductive weak solutions of regularized problems.

\section{Mathematical model for a binary alloy solidification}
In this section we explain the mathematical model we will use to study the solidification process.
\subsection{The physical problem}
 The defective solidification of metal parts during their mould is one of the main reasons for failure before 
 the time envisioned by the industrial design. The most popular procedure for molding of metal parts consists 
 in pouring a material (pure or compound) in its liquid state at a high temperature in a mould. Once the incandescent 
 matter has been poured, it will loose heat and begin to solidify. Depending on various factors, amongst them 
 the temperature gradient and the concentration of solute in the solvent, it is possible that, during the 
 solidification inside the mould, the fluid may tend to separate into regions with different characteristics, 
 creating solute nodules surrounded by solvent with a greater level of purity than that in the original material. 
 An important consequence of this segregation is that the resulting metallic part will have, in some sectors, 
 metallurgical properties that differ with those calculated. Amongst these, the most important ones are the 
 tensile strength and malleability.
The binary alloy is poured into the mould in such a way that the lowest part is in contact with the bottom wall 
at low temperature, meanwhile the top part remains at fusion temperature. This can create a strong temperature 
gradient, due to which the process of solidification will produce an heterogeneous material. In the top of domain 
part we will see a liquid section, while at the bottom we will have already solid material, leaving a middle zone 
in which the process of solidification is not yet complete. This section of the mixture is characterized by the 
coexistence of liquid and solid material.
\subsection{Solidification model for a binary alloy}
In a binary alloy the element called solute is the one that is dissolved in the other one, called solvent. Without 
loss of generality, we can assume that in the liquid state the mixture is homogeneous. Therefore, at the beginning 
of the process a solidification front appears, which consists of solid material dendrites, around which the material 
in liquid state will adhere until we reach a completely solid phase. As it known, this process of solidification is 
fundamentally controlled by the thermodynamical variables of solute concentration and temperature in the mould. The 
variables determine the state of the matter trough the phase diagram (see Figure 2). As a fundamental reference we 
can point out, among others, the texts by Milne-Thomson \cite{Mil}, Shapiro \cite{Sha} and the doctoral theses of 
Ahmad \cite{Ah}, Gaillard \cite{Ga} and Gasser \cite{Gas}.

In the domain we can distinguish three regions, depending on the concentration $c$ and the temperature $\theta$. 
Denoting with $\Omega$ the mould  into which the melted matter has been poured, we call $\Omega_l, \Omega_m$ and 
$\Omega_s$ the regions occupied by matter in liquid, mixture (coexistence of solid and liquid) and solid state, 
respectively. Furthermore, we identify the interfaces between these three sub-domains, calling $\Gamma_{ml}$ the
mixture-liquid interface, and  $\Gamma_{sm}$ the one corresponding to solid-mixture. With the purpose of considering 
borders conditions, we will also fix some notation for the three regions of the exterior frontier $\Gamma$. This regions 
are denoted by $\Gamma_t, \Gamma_b$ and $\Gamma_v$ (see Figure 1).
\begin{center}
\includegraphics[width=7cm,height=4.5cm]{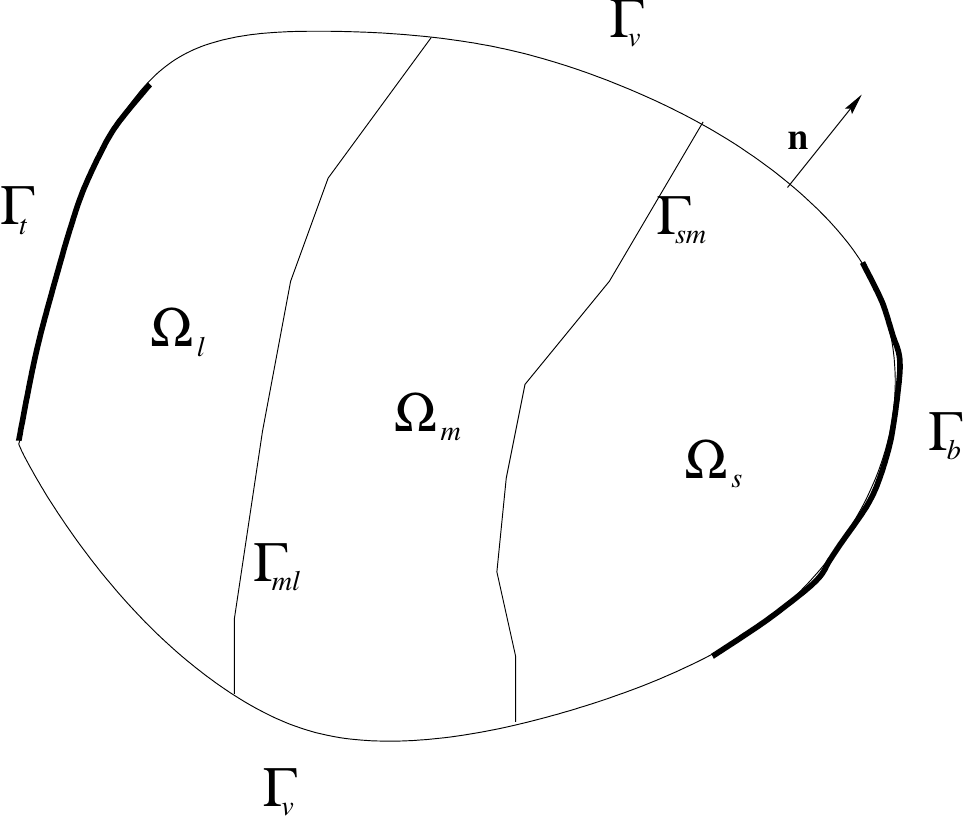}
\end{center}

\centerline{{\small{\bf Figure 1.} Solidification mould}}

In presence of thermodynamic equilibrium, we use the {\it phase diagram} to determine the state of the alloy. 
As a fundamental reference we can point out the doctoral theses by
Ahmad \cite{Ah}, Gasser \cite{Gas} and  Gaillard \cite{Ga}.
In this diagram (see Figure 2), the horizontal axis corresponds to the concentration $c$, the vertical one 
corresponds to the temperature $\theta$ and  $\theta_F$ and $\theta_E$ are the fusion and eutectic temperature 
respectively. Therein we represent two regular curves: the {\it liquids} $\gamma_\ell(\theta)$
and the {\it solidus} $\gamma_s(\theta)$, jointing the point $(0,\theta_F)$ of fusion of solvent pure with 
the points $(\gamma_l(\theta_E), \theta_E)$ and $(\gamma_s(\theta_E), \theta_E)$, respectively, where 
$\gamma_s(\theta_E)=c_A$ and $\gamma_l(\theta_E)=c_E$. They represent the temperatures of solidification 
of start and end  in the equilibrium.
\begin{center}
\includegraphics[width=11cm,height=8cm]{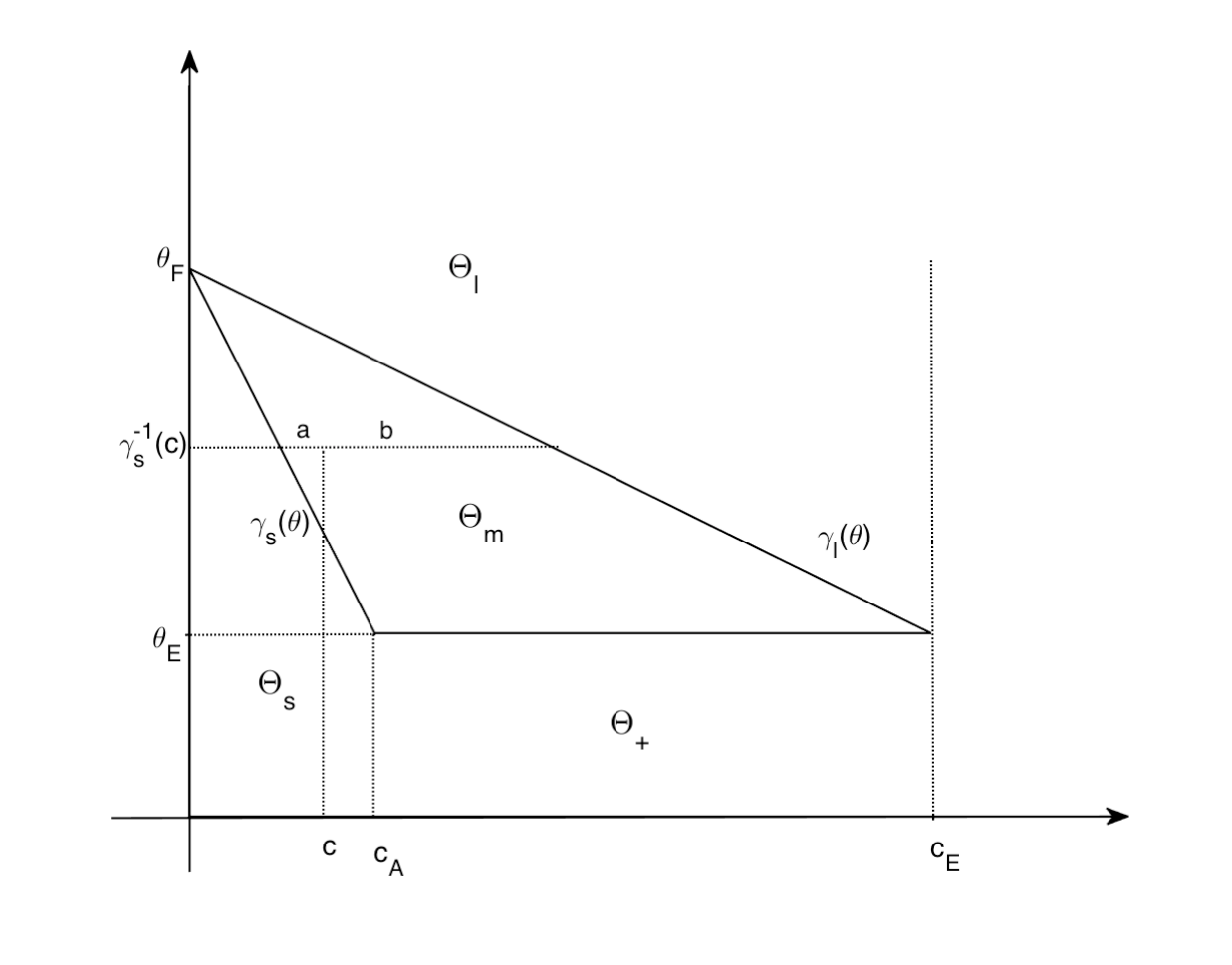}
\end{center}

\centerline{{\small{\bf Figure 2.} Phase Diagram}}

In the {\it phase diagram} we identify three zones: the liquid $\Theta_l$,
the mixture $\Theta_m$ and the solid $\Theta_s$, which are defined as:
\begin{eqnarray*}
\Theta_l&=&\{(c,\theta) \in \mathbb{R}^2 \,| \ 0<c<c_E,\:\:\theta>\gamma_l^{-1}(c)\},\\
\Theta_m&=&\{(c,\theta) \in \mathbb{R}^2 \,| \ \theta_E<\theta<\theta_F,\:\:\gamma_s(\theta)<c<\gamma_l(\theta)\},\\
\Theta_s&=&\{(c,\theta) \in \mathbb{R}^2 \,| \ 0<c<c_A,\:\:\theta<\gamma_l^{-1}(c)\},
\end{eqnarray*}
Moreover, we denote $\Theta_\pm=(\mathbb{R}_\pm\times \mathbb{R})\setminus\overline{\Theta_l\cup\Theta_s\cup\Theta_m}$.

In a stationary case, in thermodynamic equilibrium, the sub-domains $\Omega_l, \Omega_m$ and $\Omega_s$ can be described
by the thermodynamics variables of concentration and temperature,
$(c,\theta)$ as follows:
\begin{eqnarray*}
\Omega_a&=&\{x \in \Omega \,| \ (c(x), \theta(x))\in \Theta_a\},\, a=l, m, s,\\
\Gamma_{ml}&=&\{x \in \Omega \,| \ c(x)=\gamma_l(\theta(x))\}=\partial\Omega_m\cap\partial\Omega_l,\\
\Gamma_{sm}&=&\{x \in \Omega \,| \ c(x)=\gamma_s(\theta(x))\}=\partial\Omega_s\cap\partial\Omega_m,\\
\Omega_{ml}&=&\Omega_m \cup \Gamma_{ml} \cup \Omega_l.
\end{eqnarray*}
In order to model the transport phenomenon in the mixture region, we denote by $c_l$ and $c_s$ the concentration 
of solute in liquid and solid phase, respectively, which can be precisely determined through
the {\it phase diagram} (see e.g. \cite{Du-Or-St},  \cite{Er},  \cite{Ga}), then, $c_l=c$ in $\Theta_l$ and 
$c_l=\gamma_\ell (\theta)$ if $ (c,\theta) \in \overline{\Theta_m}\setminus(\partial \Theta_{-}\cup \partial \Theta_{+})$.
Artificially, we extend the definition of $c_l$ to whole $\mathbb{R}^2$ as:
\begin{equation}\label{eq1}
c_l(c,\theta)=\left\{
\begin{array}{ll}
c & \ \mbox{ if } \ (c,\theta) \in \Theta_l,\\
\gamma_\ell (\theta) & \ \mbox{ if } \ (c,\theta) \in \overline{\Theta_m}\setminus(\partial \Theta_{-}\cup 
\partial \Theta_{+}),\\
\gamma_\ell (\gamma_s^{-1}(c)) & \ \mbox{ if } \ (c,\theta) \in  \Theta_s,\\
0 & \ \mbox{ if } \ (c,\theta) \in \overline{\Theta_{-}},\\
c_E& \ \mbox{ if } \ (c,\theta) \in \overline{\Theta_{+}}.
\end{array}
\right.
\end{equation}
In the solid-liquid mixture region it makes sense to define a function that indicate the fraction of matter 
in solid state $f_s(c,\theta)$ and in liquid state 
$$f_l(c,\theta)=1-f_s(c,\theta),$$
per unit of volume. Thus, $f_s$ can be calculated by (see Figure 2):
$$f_s(c,\theta)=\left\{
\begin{array}{ll}
0 & \ \mbox{ if } \ (c,\theta) \in \Theta_l,\\
\displaystyle\frac{a}{a+b} & \ \mbox{ if } \ (c,\theta) \in \Theta_m,\\
\,1& \ \mbox{ if }\,(c,\theta) \in \Theta_s,
\end{array}
\right.
$$
where $a=\gamma_l(\theta)-c$ and $b=c-\gamma_s(\theta)$. Analogously, we extend this definition of $f_s$ 
to $\mathbb{R}^2$ by 1 if $(c,\theta) \in \overline{\Theta_s\cup \Theta_{+}}$ and by 0 in other case.

Moreover, the unknown function of concentration $c$ that intervenes in the equations is a weighted average 
between $c_l$ and $c_s$, that is, 
$$ c= c_l\,(1-f_s)+c_s\,f_s.$$

Taking these expressions into account, we propose to model the dynamics of the mixture from the moment it is 
poured into the mould in liquid phase until it has finished its solidification. It is clear that, as time passes, 
the zones $\Omega_a,\,a\in\{l,m,s\}$, will appear, evolve, and disappear within $\Omega$. This way, we will see 
the appearance of a concentration and a temperature gradient which, added to the action of the force of gravity 
which acts over the alloy, can develop a movement of {\it velocity} $\textit{\textbf v}(x,t)$ and {\it pressure field} 
$p(x,t)$ in the interior of the mixture $\Omega_{ml}\times (0,T)$ (see \cite{Ga-Ra}). It is important to observe that 
the set $\Omega_{ml}$ and the interfaces $\Gamma_{sm}, \Gamma_{ml}$ are unknown a priori, because its dependence on 
the concentration $c$ and temperature $\theta$. This is an additional difficult of the problem.

We model the transport during the solidification process using the incompressible Navier-Stokes equations, coupled 
with terms of force (external and internal) as follows (see \cite{Ga})
\begin{eqnarray}
\hspace*{-.7cm}\rho\frac{\partial\textit{\textbf v}}{\partial t} -2\nu \, {\rm div}\,e(\textit{\textbf v})
+\rho(\textit{\textbf v} \cdot \nabla)\, \textit{\textbf v} + F_i(c, \theta)\, \textit{\textbf v}
+\nabla  p &=&\mathbf{F}_e(c, \theta) \ \ {\rm in} \ \mathcal{Q}_{ml},\label{eq2}\\
{\rm div} \, \textit{\textbf v}& =&0 \ \ {\rm in} \ \mathcal{Q}_{ml},\label{eq3}
\end{eqnarray}
where $\mathcal{Q}_{ml}=\Omega_{ml}\times (0,T)$, $\nu$ is the {\it dynamical viscosity}, $\rho$ is 
the {\it mean density} in the mixture, both considered constants, and $e(\textit{\textbf v})
=( \nabla \textit{\textbf v}+(\nabla \textit{\textbf v})^t)/2$ is the {\it linearized stress tensor} 
of $\textit{\textbf v}$. The external force ${\mathbf F}_e (\cdot,\cdot)$ 
is given by the Boussinesq approximation:
\begin{equation}\label{eq4}
\mathbf{F}_e(c, \theta)=\rho\, \mathbf{g}\,(\alpha\,(\theta-\theta_r)
+\beta \, (c_l(c, \theta)-c_r)),
\end{equation}
where $\alpha, \beta$ are known real constants, $\theta_r, c_r$ are respectively a temperature and a concentration
of reference, and $\mathbf{g}= (0, 0, - g)^t$ is the gravitational force.

The term $F_i(\cdot,\cdot)$ represents the {\it internal force} within a porous material which opposes the movement 
of the fluid on $\Omega_{ml}$. It is known as the {\it Carman-Kozeny law}, and it is given by
\begin{equation}\label{eq5}
F_i(c, \theta)= C_0\, \frac{f_s^2(c, \theta)}{(1-f_s(c,
\theta))^3} \quad \mbox{ with } C_0>0.
\end{equation}
In the liquid region $F_i(\cdot, \cdot) = 0$ and is hypersingular in $\Omega_{ml}$. On the other hand, we suppose 
that the velocity field $\textit{\textbf v}$ is $0$ on $\Omega_s$ and continuous over the whole domain $\Omega$.

The diffusion process of the concentration and the heat transfer in a multi-component mixture is described by a 
non-linear system of parabolic equations. The concentration, the temperature and the velocity are the variables 
involve. These equations take into account the phenomena of diffusion and transport, and are deduced considering
the laws of solute conservation in the mixture and conservation of energy. Simplifying these equations, using the 
laws of Fourier and Fick, we obtain
\begin{eqnarray}
\frac{\partial c}{\partial t}- \eta \,\Delta \, c+\textit{\textbf v} \cdot\nabla  c_l (c, \theta)&=&0\quad 
\mbox{ in } \ \mathcal{Q},\label{eq6} \\
\frac{\partial}{\partial t} \,H(c, \theta)-\kappa\, \Delta\,\theta+ \rho \, C_p \, \textit{\textbf v} \cdot 
\nabla\theta & =&0 \quad \mbox{ in } \mathcal{Q},\label{eq7}
\end{eqnarray}
where $\mathcal{Q}=\Omega\times (0,T)$, $\kappa, \eta$ are two strictly positive real constants that represent 
the physical diffusions, $C_p$ is the \textsl{caloric capacity}, $c_l(\cdot,\cdot)$ is the liquid concentration 
and $H(\cdot,\cdot)$ is the \textsl{enthalpy}, defined as
$$H(\cdot,\cdot)= \rho\, C_p\,\theta+ \rho\,L_f\, f_l(c,\theta),$$
where $L_f$ is the latent heat of fusion. There follows, we approximate $H(\cdot,\cdot)$ by  $ \rho \,C_p \,\theta$ 
because we do not know how to treat the non-linear term $\partial_t \frac{\gamma_l(\theta)-c}{\gamma_l(\theta)
-\gamma_s(\theta)}$ that appears in (\ref{eq7}) in  $\Omega_m$.

The initial conditions are obtained from the moment in which the melted material is poured into the solidification 
mould and are, in general, regular functions. The limit conditions considered are:
\begin{itemize}
\item Adherence of the fluid over $\Gamma_{bt}=\Gamma_b\cup\Gamma_t$.
\item The normal flux and the tangential stress are zero over $\Gamma_v$.
\item The normal concentration flux is zero over $\Gamma$ and a fixed total concentration of solute in the domain. 
The temperature is known over $\Gamma_{bt}$ and the wall $\Gamma_v$ is adiabatic.
\end{itemize}

\subsection{Setting the evolutive mathematical model} 
Let be $\Omega$ an open domain subset of $\mathbb{R}^3$ with regular boundary $\Gamma =\partial\Omega$. Then, 
from (\ref{eq2})-(\ref{eq7}) and the boundary conditions stated above, the problem of our present study reads: 
Find functions $(c, \theta, \textit{\textbf v}, p): \mathcal{Q} \rightarrow \mathbb{R}^6$, satisfying
\begin{eqnarray}
&& \frac{\partial c}{\partial t}-\eta\, \Delta c +\textit{\textbf v}\cdot\nabla c_l (c, \theta) = 0 
\quad {\rm in} \ \mathcal{Q},\label{eq8}\\
&& \frac{\partial c}{\partial n} = 0 \quad {\rm on} \ \Gamma\times (0,T), \qquad \int\limits_\Omega 
c \,(x,t) \, dx = c_g \quad {\rm in} \ (0,T),\label{eq8a}\\
&& \ c(x,0)=c_0(x) \quad {\rm in} \ \Omega, \label{eq8b}
\end{eqnarray}
\begin{eqnarray}
&&\frac{\partial \theta}{\partial t}-\kappa \Delta \theta +\rho\,
C_p\,\textit{\textbf v}\cdot \nabla \theta = 0 \quad {\rm in} \
\mathcal{Q},\label{eq9}\\
&&\theta = \theta_\delta \quad {\rm on} \ \Gamma_{bt} \times (0,T),
\qquad \frac{\partial \theta}{\partial \textit{\textbf n}} = 0 \quad {\rm on}
\ \Gamma_v \times (0,T),\label{eq9a}\\
&&\theta(x,0)=\theta_0(x) \quad {\rm in} \ \Omega,\label{eq9b}
\end{eqnarray}
\begin{eqnarray}
&&\frac{\partial \textit{\textbf v}}{\partial t}-2 \nu\, {\rm div}\,e(\textit{\textbf v})
+\rho\, (\textit{\textbf v}\cdot \nabla)\textit{\textbf v}+F_i(c,\theta)\,\textit{\textbf v}
+\nabla p=\mathbf{F}_e (c, \theta) \ {\rm in} \ \mathcal{Q}_{ml}, \label{eq10}\\
&&{\rm div}\, \textit{\textbf v}=0 \quad {\rm in} \ \mathcal{Q}_{ml}, \ \hspace{2.8cm} 
\textit{\textbf v} =\mathbf{0} \quad {\rm in} \ \Omega_s \times (0,T),\label{eq10a}\\
&& \textit{\textbf v} = \mathbf{0} \quad {\rm on} \ \Gamma_{bt} \times (0,T), \ \hspace{2.1cm}
\textit{\textbf v}\cdot \textit{\textbf n} = 0 \ {\rm on} \quad \Gamma_ v \times (0,T),\label{eq10b}\\
&&\sigma (\textit{\textbf v}, p) \, \textit{\textbf n} \wedge \textit{\textbf n} = 0
\quad {\rm on} \ \Gamma_v\times (0,T), \ \ \quad \textit{\textbf v}(x,0)
= \textit{\textbf v}_0(x) \quad {\rm in} \ \Omega.\label{eq10c}
\end{eqnarray}
Here $c_g$ is the total quantity of solute in the mixture and is a strictly positive real constant that satisfies 
the compatibility relation
\begin{equation}\label{eq11}
0\leq c_g \leq \gamma_\ell(\theta_E)|\Omega|.
\end{equation}
Furthermore, $\theta_\delta$ is a known distribution of temperature on $\Gamma_{bt}$, $n$ denote the vector 
unit outward to the surface $\Gamma$ and $\sigma(\textit{\textbf v},p)$ represents the stress tensor given by 
$\sigma(\textit{\textbf v}, p)= -p\,I+2\,\nu\,e(\textit{\textbf v})$.
\section{Function spaces and preliminaries}
The vector-valued functions in $\mathbb{R}^3$ are denoted by $\{\,\textit{\textbf{u}}, \,\textit{\textbf{v}},
\,\textit{\textbf{w}}\,\}$, while the scalar ones are written solely by $\{\,p,\,\varphi,\,\psi,\,\phi,\,c,\,\theta \}$.
For simplicity, we denote $\mathbf{L}^2$ and $\mathbf{H}^1$ instead of $L^2(\Omega)^3$ and $H^1(\Omega)^3$, $L^p(X)$ 
instead of $L^p(0,T; X)$, for $1\leq p\leq \infty$, etc. Also, the scalar product in $L^2$ will be denoted by 
$(\cdot,\cdot)$, and $\langle\cdot,\cdot\rangle$ will denote some duality products. For all integer $m\geq 0$, 
we denote the norm of the scalar and vector-valued Hilbert spaces $H^m$ and $L^2$ as $\Vert\cdot\Vert_m$ and 
$\Vert\cdot\Vert$ respectively. The $L^p$ norma will be denoted by $\Vert\cdot\Vert_{L^p}$. Other notation that 
we use are: $H^1(H^1)$ to denote the space $H^1(0,T; H^1(\Omega))$ and $W^{1, \infty}(\Omega)$ denote the usual 
Sobolev space  which consists of functions in $L^\infty$ such that their derivatives in 
the sense of the distributions are in $L^\infty$.

Let us consider the following function spaces:
\begin{eqnarray*}
\textit{\textbf H}&=&\{ \textit{\textbf u} \in\mathbf{L}^2: {\rm div}\,\textit{\textbf u}=0 \mbox{ in } \Omega,
\textit{\textbf u}\cdot \textit{\textbf n} =0 \mbox{ on } \partial \Omega \},\\
\textit{\textbf V}&=&\{ \textit{\textbf u} \in \mathbf{H}^1_0: {\rm div}\,\textit{\textbf u}=0 \mbox{ in } \Omega\},\\
\textit{H}_\theta & =& \{ \, \psi \in \textit{H}^1: \psi= 0 \ {\rm on} \ \Gamma_{bt}\},\\
\textit{\textbf H}_v& = &\{ \, \textit{\textbf w} \in \mathbf{H}^1: {\rm div}\, \textit{\textbf w} = 0 \mbox{ in } 
\Omega,\ \textit{\textbf w} = 0 \mbox{ on } \Gamma_{bt} \mbox{ and } \textit{\textbf w} \cdot \textit{\textbf n} 
= 0 \mbox{ on } \Gamma_{v}\}.
\end{eqnarray*}
From now on, $C>0$ will denote different constants, depending only on the fixed data of the problem.

In forthcoming section of the paper we use some inequalities, whose proofs can be found in \cite{La}, and that we
state in what follows:
\begin{lemma}\label{Lemma 3.1} (Young's Inequality) 
Let $A\geq 0, \, B\geq 0$ be real constants. Suppose that $p, \, q \in \mathbb{R}$ such that $ \, 1\leq p < \infty\,$ 
and $\, \displaystyle \frac{1}{p} + \frac{1}{q} = 1$. Then, for all $ \, \xi > 0\, $, the following inequality hold:
\begin{equation}\label{eq12}
A\, B \leq \frac{\xi}{p}\, A^p + \frac{(p - 1) \,\xi^{1-p}}{p} \, B^q.
\end{equation}
\end{lemma}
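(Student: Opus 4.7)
The plan is to deduce the parameterized inequality (\ref{eq12}) from the classical (unweighted) Young inequality $ab \leq a^p/p + b^q/q$ (for $a,b \geq 0$ and conjugate exponents $p, q > 1$) by an appropriate rescaling. The degenerate cases ($A = 0$, $B = 0$, or $p = 1$ with $q = +\infty$) can be dispatched directly and separately.

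First I would establish the classical Young inequality, which carries the entire analytic content of the result. The cleanest route is to exploit convexity of the exponential: since $1/p + 1/q = 1$, Jensen's inequality applied to $\exp$ with weights $1/p$ and $1/q$ yields
$$ab \;=\; \exp\!\left(\frac{\log a^p}{p} + \frac{\log b^q}{q}\right) \;\leq\; \frac{e^{\log a^p}}{p} + \frac{e^{\log b^q}}{q} \;=\; \frac{a^p}{p} + \frac{b^q}{q}.$$
Equivalently, one may use concavity of $\log$, or the strict convexity of $t \mapsto t^p$ via a tangent-line argument; any of these standard routes will do.

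The parameter $\xi$ is then introduced by the scaling $a := \xi^{1/p} A$ and $b := \xi^{-1/p} B$, which preserves the product ($ab = AB$). Substituting into the classical inequality produces the first coefficient $\xi/p$ as required in (\ref{eq12}), while using $1/q = (p-1)/p$ rewrites the second coefficient as a constant multiple of a negative power of $\xi$ times $B^q$, matching the stated form. The only obstacle is the purely algebraic bookkeeping of the $\xi$-exponent; there is no conceptual difficulty, and no tools beyond Jensen's inequality (or equivalently, the weighted AM--GM inequality) are required.
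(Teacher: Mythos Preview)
The paper does not actually prove this lemma; it is listed among several preliminary inequalities with the remark that their proofs can be found in Ladyzhenskaya's book. Your approach---classical Young via convexity of the exponential, followed by the rescaling $a=\xi^{1/p}A$, $b=\xi^{-1/p}B$---is the standard one and is entirely sound in method.

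That said, the ``algebraic bookkeeping'' you deferred is worth carrying out explicitly, because it does not in fact reproduce the printed exponent. Your substitution yields $b^q/q = \frac{p-1}{p}\,\xi^{-q/p}B^q$, and since $q/p = 1/(p-1)$ the resulting power of $\xi$ is $-1/(p-1)$, not $1-p$. These coincide only when $p=2$; for instance with $p=3$, $q=3/2$, $A=B=1$, $\xi=2$ the right-hand side as printed equals $\frac{2}{3}+\frac{2}{12}=\frac{5}{6}<1$, so the stated form is actually false. The intended exponent is almost certainly $1/(1-p)$, and your argument proves exactly that corrected version.
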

\begin{lemma}\label{Lemma 3.2} (Gronwall's Inequality) 
Let functions $g(t)$ and $h(t)$ nonnegative, continuous with $ \, g^{'}(t) \geq 0\, $ and $h(t)$ sumable on $[0,T]$. 
Assume that it is satisfied
\[ \varphi\,(t) + \int_0^t \varphi^*\,(\tau)\, d\,\tau \leq\frac{g(t)} {\lambda} + \int_0^t h\, (\tau)\, 
\varphi\, (\tau)\, d\, \tau, \]
for two positive continuous functions $ \, \varphi \,$ and $\, \varphi^*\,$ on $\, [0,T]\,$ and the constant 
$\, \lambda > 0$. Then it holds
\begin{equation}\label{eq13}
\varphi\,(t) + \int_0^t \varphi^*\,(\tau)\, d\,\tau \leq \frac{g(t)} {\lambda}\, (1 + \int_0^t h\, (\tau)\, 
d\, \tau \,) \,\exp\, ( \int_0^t h\, (\tau)\, d\, \tau\,).
\end{equation}
\end{lemma}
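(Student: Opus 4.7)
The plan is to reduce the hypothesis to a classical scalar Gronwall--Bellman inequality and then apply the standard integrating-factor trick. First I would set
\[ \Psi(t) := \varphi(t) + \int_0^t \varphi^*(\tau)\,d\tau, \quad G(t) := \frac{g(t)}{\lambda}, \quad H(t) := \int_0^t h(\tau)\,d\tau. \]
Since $\varphi^*(\tau)\geq 0$ implies $\varphi(\tau) \leq \Psi(\tau)$ for every $\tau\in[0,T]$, the hypothesis of the lemma can be rewritten as
\[ \Psi(t) \leq G(t) + \int_0^t h(\tau)\,\Psi(\tau)\,d\tau, \]
where $G$ is nondecreasing (since $g'(t)\geq 0$) and $h\geq 0$, $h\in L^1(0,T)$. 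This is precisely the classical integral inequality in the single unknown $\Psi$.

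Next, I would introduce the auxiliary quantity $U(t) := \int_0^t h(\tau)\,\Psi(\tau)\,d\tau$, for which $U(0)=0$ and $U'(t) = h(t)\,\Psi(t) \leq h(t)\,\bigl(G(t) + U(t)\bigr)$. Multiplying by the integrating factor $e^{-H(t)}$ transforms this into
\[ \frac{d}{dt}\!\left( U(t)\,e^{-H(t)} \right) \leq h(t)\,G(t)\,e^{-H(t)}. \]
Integrating on $[0,t]$, using the monotonicity of $G$ to replace $G(\tau)$ by its upper bound $G(t)$ inside the integral, and then computing $\int_0^t h(\tau)\,e^{-H(\tau)}\,d\tau = 1 - e^{-H(t)}$, one obtains
\[ U(t) \leq G(t)\left( e^{H(t)} - 1 \right). \]

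Combining this estimate with $\Psi(t) \leq G(t) + U(t)$ yields in fact the sharper bound $\Psi(t) \leq G(t)\,e^{H(t)}$, from which the stated inequality
\[ \varphi(t) + \int_0^t \varphi^*(\tau)\,d\tau \leq \frac{g(t)}{\lambda}\,\bigl(1 + H(t)\bigr)\,e^{H(t)} \]
follows immediately since $1 + H(t) \geq 1$. I do not anticipate any real obstacle in this argument; the only substantive ingredient is the monotonicity hypothesis $g'(t) \geq 0$, which is used to pull $G(t)$ out of the integral and thereby present the right-hand side in the clean ``$G\cdot\mathrm{(polynomial)}\cdot\exp$'' form stated in the lemma, rather than as a weighted convolution of $G$ against $h$.
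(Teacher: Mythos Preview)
Your argument is correct. The paper does not actually supply a proof of this lemma: it is stated among the preliminary inequalities with the remark that their proofs ``can be found in \cite{La}'' (Ladyzhenskaya), so there is no in-paper proof to compare against. Your reduction to the scalar Gronwall--Bellman inequality via $\Psi(t)=\varphi(t)+\int_0^t\varphi^*$, followed by the integrating-factor computation on $U(t)=\int_0^t h\Psi$, is the standard route and is fully valid under the stated hypotheses. As you observe, the monotonicity $g'\geq 0$ is exactly what lets you extract $G(t)$ from the integral, and you in fact obtain the sharper conclusion $\Psi(t)\leq G(t)\,e^{H(t)}$; the extra factor $(1+H(t))$ in the paper's stated bound is harmless slack.
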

\begin{lemma}\label{Lemma 3.3} (\textit{Friedrich's Lemma}) Let
$\Omega$ be a fixed bounded domain of $\mathbb{R}^3$ with boundary $\, \partial \Omega\,$ of class $\, C^2$. 
For any  $\omega > 0$, $N_\omega$ basis functions ${\textit{\textbf w}}^i$,  $i=1,2,\dots, N_\omega$, can be constructed satisfying 
the following inequality
\begin{equation}\label{eq14}
\|\textit{\textbf u}\|^2 \leq \sum_{i=1}^{N_\omega}\,(\textit{\textbf u},{\textit{\textbf w}}^i)^2 +\omega \, 
\|\nabla\textit{\textbf u}\|^2 \quad \forall \,\textit{\textbf u} \in\mathbf{H}^1_0.
\end{equation}
\end{lemma}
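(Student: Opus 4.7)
The plan is to prove the lemma by contradiction, using a compactness argument based on the Rellich--Kondrachov embedding. The key is to first choose the basis: we take $\{\mathbf{w}^i\}_{i\geq 1}$ to be a complete orthonormal basis of $\mathbf{L}^2$ (for instance, the eigenfunctions of $-\Delta$ with homogeneous Dirichlet data, which exist and form an $L^2$-orthonormal basis since $\Omega$ is bounded with $C^2$ boundary). With this choice, Parseval's identity gives $\|\mathbf{u}\|^2 = \sum_{i=1}^\infty (\mathbf{u},\mathbf{w}^i)^2$ for every $\mathbf{u}\in\mathbf{L}^2$.

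The plan is then to fix $\omega>0$ and assume, for contradiction, that no finite $N_\omega$ works. That produces, for every integer $N$, an element $\mathbf{u}_N\in\mathbf{H}^1_0$ violating (\ref{eq14}), and after normalizing so that $\|\mathbf{u}_N\|=1$, the violation reads
$$
1 > \sum_{i=1}^N (\mathbf{u}_N,\mathbf{w}^i)^2 + \omega\,\|\nabla\mathbf{u}_N\|^2.
$$
In particular $\|\nabla\mathbf{u}_N\|^2 < 1/\omega$, so the sequence is bounded in $\mathbf{H}^1_0$. By the Rellich--Kondrachov theorem (applicable since $\partial\Omega$ is $C^2$), a subsequence, still denoted $\{\mathbf{u}_N\}$, converges strongly in $\mathbf{L}^2$ and weakly in $\mathbf{H}^1_0$ to some $\mathbf{u}\in\mathbf{H}^1_0$, with $\|\mathbf{u}\|=1$ and $\|\nabla\mathbf{u}\|\leq\liminf\|\nabla\mathbf{u}_N\|$.

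The next step is to pass to the limit. For any fixed $M$, and all $N\geq M$, dropping nonnegative terms gives
$$
\sum_{i=1}^M (\mathbf{u}_N,\mathbf{w}^i)^2 \leq 1 - \omega\,\|\nabla\mathbf{u}_N\|^2.
$$
Letting $N\to\infty$ (strong $\mathbf{L}^2$ convergence handles the finite sum on the left, and the lower semicontinuity of the Dirichlet norm bounds the right side) yields
$$
\sum_{i=1}^M (\mathbf{u},\mathbf{w}^i)^2 \leq 1 - \omega\,\liminf_{N\to\infty}\|\nabla\mathbf{u}_N\|^2.
$$
Finally, letting $M\to\infty$ and using Parseval's identity for $\mathbf{u}$ produces $1\leq 1-\omega\,\liminf\|\nabla\mathbf{u}_N\|^2$, hence $\|\nabla\mathbf{u}_N\|\to 0$ along the subsequence. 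Then $\mathbf{u}_N\to\mathbf{u}$ in $\mathbf{H}^1_0$ with $\nabla\mathbf{u}=0$, which forces $\mathbf{u}=0$ by the Dirichlet condition, contradicting $\|\mathbf{u}\|=1$.

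The main obstacle I expect is really just bookkeeping in the double limit $N\to\infty$ followed by $M\to\infty$: one needs both the strong $\mathbf{L}^2$ convergence (to identify the inner products at each fixed level $M$) and the weak $\mathbf{H}^1_0$ lower semicontinuity (to keep the gradient term on the right), and the conclusion only comes after invoking completeness of the basis in $\mathbf{L}^2$. Everything else is a direct application of Rellich--Kondrachov, so the substantive choice is really the orthonormal basis plus the compact embedding, which is where the regularity hypothesis on $\partial\Omega$ enters.
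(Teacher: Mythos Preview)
Your argument is correct. Note, however, that the paper does not prove this lemma at all: it is stated as a preliminary tool with the proof deferred to Ladyzhenskaya~\cite{La}, so there is no in-paper proof to compare against.

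Your route---contradiction plus Rellich--Kondrachov and Parseval for a complete $\mathbf{L}^2$-orthonormal system---is sound; the double limit $N\to\infty$ then $M\to\infty$ is handled correctly. A more direct (and more constructive) alternative, closer in spirit to the classical treatment, is to take the $\mathbf{w}^i$ specifically as the $\mathbf{L}^2$-orthonormal Dirichlet eigenfunctions of $-\Delta$ with eigenvalues $0<\lambda_1\le\lambda_2\le\cdots\to\infty$. Writing $\mathbf{u}=\sum_i a_i\mathbf{w}^i$ for $\mathbf{u}\in\mathbf{H}^1_0$ one has $\|\mathbf{u}\|^2=\sum_i a_i^2$ and $\|\nabla\mathbf{u}\|^2=\sum_i\lambda_i a_i^2$, so
\[
\|\mathbf{u}\|^2=\sum_{i\le N}a_i^2+\sum_{i>N}a_i^2\le \sum_{i=1}^{N}(\mathbf{u},\mathbf{w}^i)^2+\frac{1}{\lambda_{N+1}}\|\nabla\mathbf{u}\|^2,
\]
and one simply picks $N_\omega$ with $\lambda_{N_\omega+1}\ge 1/\omega$. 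This avoids the compactness/contradiction machinery and gives an explicit $N_\omega$, while your argument has the advantage of working for \emph{any} complete orthonormal basis of $\mathbf{L}^2$, not just eigenfunctions. One small remark: the compact embedding $\mathbf{H}^1_0\hookrightarrow\mathbf{L}^2$ needs only boundedness of $\Omega$ (extend by zero), so the $C^2$ hypothesis is not actually where the argument leans.
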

We denote the classical bilinear and trilinear forms by
\[(\nabla \textit{\textbf u}, \nabla  \textit{\textbf v})=\sum_{i,j=1}^3 \int_\Omega \frac{\partial u_j}{\partial x_i}
\frac{\partial v_j}{\partial x_i}\, dx, \quad (\textit{\textbf u}\cdot \nabla  \textit{\textbf v}, \textit{\textbf w})
= \sum_{i,j=1}^ 3\int_\Omega u_j \,\frac{\partial v_i}{\partial x_j}\, w_i \, dx, \]
for all vector-valued functions $ \, \textit{\textbf u},\textit{\textbf v}, \textit{\textbf w}$ for which the integrals
make sense. Moreover, the trilinear form has the properties
\[(\textit{\textbf u} \cdot \nabla  \textit{\textbf v}, \textit{\textbf w}) =-(\textit{\textbf u} \cdot \nabla
\textit{\textbf w}, \textit{\textbf v}), \quad (\textit{\textbf u}\cdot \nabla  \textit{\textbf v}, \textit{\textbf v}) 
= 0\quad \forall \, \textit{\textbf u} \in \textit{\textbf H}_v,\ \forall \, \textit{\textbf v}, \,
\textit{\textbf w} \, \in \mathbf{H}^1.\]
Hereinafter, we will use the following classical interpolation and Sobolev
inequalities (for $3D$ domains):
$$\Vert v\Vert_{L^6}\leq C\Vert v\Vert_1, \quad
\Vert v\Vert_{L^3}\leq\Vert v\Vert^{1/2}\Vert
v\Vert_1^{1/2}\quad\forall v \in H^1$$ and
$$\Vert v\Vert_{L^\infty}\leq C \Vert v\Vert_1^{1/2}\Vert v\Vert_2^{1/2}\quad\forall v \in H^2.$$
\vspace{2mm}

We state now our problem rigorously establishing regularity assumptions on the boundary $\Gamma $ and 
some technical assumption on the external data.
\vspace{2mm}

\noindent {\bf Hypothesis:}
\begin{itemize}
\item[({\bf S$_1$})]
{\it Let $O_0$ be a neighborhood of the
origin. Thus $O_0\subseteq{\rm int\ }\Omega$, $\Omega$ is
connected and} $\,\Omega\subseteq B(0,R),\,R>0$.
{\it The boundary} $\Gamma \in C^2$;
\item[({\bf S$_2$})]
{\it The given external field of
temperature $\theta_\delta$ belongs to
$H^1(L^\infty(\Gamma_{bt}))$ and there exists
a continuous extension in $H^1(H^1)\cap L^2(H^1)$, satisfying
$\theta_\delta(x,0)=\theta_\delta(x,T)$ in $\Omega$;}
\item[({\bf S$_3$})]
{\it The function of concentration in
liquid phase $c_l(\cdot,\cdot)$ belongs to
$W^{1,\infty}(\mathbb{R}^2)$}.
\end{itemize}
\vspace{2mm}

There follows, we define precisely the notion of a weak reproductive solution for the system (\ref{eq8})-(\ref{eq10c}).
\begin{definition}\label{Definition 3.4} 
We say that the triplet of functions $(c(x,t),\,\theta(x,t),\,\textit{\textbf v}(x,t))$ is a weak reproductive 
solution of the evolutive problem (\ref{eq8})-(\ref{eq10c}) at time $T$ if and only if there exists
$c_0,\theta_0\in\,L^2$ and $\textit{\textbf v}_0 \in\,\textit{\textbf H}$ such that the following relations hold:
\begin{eqnarray}
&& c(x,T)=c_0(x),\quad \theta(x,T)=\theta_0(x),\quad
\textit{\textbf v}(x,T)=\textit{\textbf v}_0(x)\mbox{ a.e. in }\Omega,\\
&&c \in L^2(H^1)\cap L^\infty(L^2),\quad\ c(x,0)=c(x,T)\mbox{ in }\Omega,\\
&&\theta \in L^2(H^1)\cap L^\infty(L^2),\quad\ \theta(x,0)=\theta(x,T)\mbox{ in }\Omega,\\
&&\textit{\textbf v} \in L^2(\textit{\textbf H}_v)\cap L^\infty(\textit{\textbf H}),\quad \
\textit{\textbf v}(x,0)=\textit{\textbf v}(x,T)\mbox{ in }\Omega,\\
\noalign{\medskip}
&&0 \leq c \leq \gamma_{\ell}\,(\theta_E) \ \mbox{ in } \mathcal{Q},\label{eq18}\\
&&\int\limits_\Omega c\,(x,t) \, dx = c_g \ \mbox{ in } (0,T),\label{eq19}\\
&&\theta (\cdot,t)- \theta_\delta(\cdot,t) \in H_\theta \ \mbox{ in } (0,T),\label{eq20}\\
&&\textit{\textbf v}=0 \quad \mbox{ in }\Omega_s \times (0,T),\label{eq21}
\end{eqnarray}
and the equations are satisfies in a distributional sense:
\begin{eqnarray}
&&\hspace*{-1cm}(\frac{\partial c}{\partial t}, \varphi)+\eta \,(\nabla c,\nabla \varphi)+(\textit{\textbf v}\cdot
\nabla c_l(c, \theta), \varphi)= 0,\label{eq15}\\
&&\hspace*{-1cm}(\frac{\partial \theta}{\partial t}, \psi)+\kappa\,(\nabla \theta,\nabla \psi)
+\rho\,C_p\,(\textit{\textbf v}\cdot \nabla \theta, \psi )= 0,\label{eq16}\\
&&\hspace*{-1cm}(\frac{\partial \textit{\textbf v}} {\partial t},\, \textit{\textbf w})
+\nu \,(\nabla \textit{\textbf v},\,\nabla \textit{\textbf w})+\rho\, ((\textit{\textbf v} \cdot
\nabla )\textit{\textbf v},\,\textit{\textbf w})+(F_i(c,\theta) \, \textit{\textbf v},\, 
\textit{\textbf w})=(\mathbf{F}_e (c, \theta),\, \textit{\textbf w}),\label{eq17}
\end{eqnarray}
for all $\varphi\in H^1,\psi\in H_\theta, \textit{\textbf w}\in \textit{\textbf H}_v$ with 
${\rm supp}\,\textit{\textbf w} \subset {\Omega_{ml}}$.
\end{definition}
It is important to note that any reproductive weak solution of the problem (\ref{eq8})-(\ref{eq10c}) 
satisfy the following result.
\begin{proposition}\label{Proposition 3.5} 
Let $ (c, \theta, \textit{\textbf{v}})$ be a weak reproductive solution of the problem
$(\ref{eq8})-(\ref{eq10c})$. Then the following inequalities are satisfied
\begin{eqnarray}
\hspace*{2.5cm}0 \leq & c(x,t) & \leq \gamma_\ell(\theta_E) \ \mbox{ a.e. in } \mathcal{Q},\label{eq23}\\
\theta_\delta^{\inf} \leq \, &\theta(x,t)& \, \leq \,\theta_\delta^{\sup} \ \mbox{ a.e. in } \mathcal{Q},\label{eq24}
\end{eqnarray}
where $ \theta_\delta^{\inf}=\min\{\mathop{\inf}\limits_{\Omega}
\theta_0(x),\,  \mathop{\inf}\limits_{\Lambda} \theta_\delta (x,t)\},
\ \theta_\delta^{\sup}=\max\{\mathop{\sup}\limits_{\Omega}
\theta_0(x),\, \mathop{\sup}\limits_{\Lambda} \theta_\delta (x,t)\}$
and $\Lambda= \Gamma_{bt}\times (0, T)$.
\end{proposition}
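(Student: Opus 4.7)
The strategy is a standard Stampacchia-type truncation combined with an integration in time; the role of the reproductivity condition is to eliminate the boundary contributions at $t=0$ and $t=T$ for the concentration, while the bounds on $\theta$ are driven by its prescribed trace on $\Gamma_{bt}$.

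\emph{Concentration bounds.} For the upper bound I would test the weak equation (\ref{eq15}) with $\varphi = (c-\gamma_\ell(\theta_E))^+\in H^1$. Writing $c_E=\gamma_\ell(\theta_E)$, the extension (\ref{eq1}) gives $c_l(c,\theta)\equiv c_E$ on $\{c>c_E\}$, so $\nabla c_l(c,\theta)=0$ a.e.\ there by the Stampacchia chain rule; equivalently, after integration by parts using $\mathrm{div}\,\textit{\textbf{v}}=0$ and $\textit{\textbf{v}}\cdot\textit{\textbf{n}}=0$ on $\partial\Omega$, the convective contribution drops out. The remaining identity
\[\tfrac12\frac{d}{dt}\|(c-c_E)^+\|^2 + \eta\,\|\nabla(c-c_E)^+\|^2 = 0\]
is then integrated from $0$ to $T$: by the reproductivity $c(\cdot,0)=c(\cdot,T)$ the pointwise-in-time terms cancel, forcing $\nabla(c-c_E)^+\equiv 0$ a.e.\ in $\mathcal Q$. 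Since $\Omega$ is connected (hypothesis $(\mathbf S_1)$), $(c-c_E)^+(\cdot,t)$ is a spatial constant $K(t)\ge 0$ for a.e.\ $t$; a positive $K(t)$ would force $c\ge c_E+K(t)$ on all of $\Omega$, contradicting the global balance (\ref{eq19}) together with the compatibility (\ref{eq11}). Hence $c\le c_E$. The lower bound $c\ge 0$ is obtained identically, testing with $-c^-$ and noting that (\ref{eq1}) also gives $c_l\equiv 0$ on $\{c<0\}$; the only change in the closing step is that the mass constraint $c_g\ge 0$ replaces (\ref{eq11}).

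\emph{Temperature bounds.} For the upper bound I would test (\ref{eq16}) with $\psi=(\theta-\theta_\delta^{\sup})^+$. This lies in $H_\theta$ because, by the very definition of $\theta_\delta^{\sup}$, the trace satisfies $\theta|_{\Gamma_{bt}}=\theta_\delta\le\theta_\delta^{\sup}$, so the positive part vanishes there. The convective term equals $\tfrac{\rho C_p}{2}\int_\Omega \textit{\textbf{v}}\cdot\nabla\bigl[(\theta-\theta_\delta^{\sup})^+\bigr]^2\,dx$ and vanishes by the divergence-free character of $\textit{\textbf{v}}\in\textit{\textbf{H}}_v$ together with $\textit{\textbf{v}}\cdot\textit{\textbf{n}}=0$ on $\partial\Omega$. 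Two equivalent routes close the estimate: either integrate from $0$ to $T$ and invoke reproductivity, producing $\nabla(\theta-\theta_\delta^{\sup})^+\equiv 0$, so that by connectedness plus the vanishing trace on $\Gamma_{bt}$ (which has positive surface measure) this truncation is zero; or integrate from $0$ to $t$ and use $(\theta(\cdot,0)-\theta_\delta^{\sup})^+=(\theta_0-\theta_\delta^{\sup})^+\equiv 0$, immediate from the definition of $\theta_\delta^{\sup}$. The lower bound is completely symmetric with $\psi=-(\theta-\theta_\delta^{\inf})^-$.

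\emph{Main obstacle.} The only genuinely delicate step is justificatory: since the definition only gives $c,\theta\in L^2(H^1)\cap L^\infty(L^2)$, I must validate the chain rule $\tfrac{d}{dt}\|u^\pm\|^2=2\langle\partial_t u,\pm u^\pm\rangle$ at the weak level. This requires $\partial_t c$ and $\partial_t\theta$ to lie in suitable dual spaces (say $L^{4/3}(H^{-1})$ and $L^{4/3}((H_\theta)')$), which I would recover directly from the distributional equations (\ref{eq15})--(\ref{eq16}) by estimating each nonlinear term using $\textit{\textbf{v}}\in L^2(\textit{\textbf{H}}_v)$, the Sobolev embeddings listed after Lemma~\ref{Lemma 3.3}, and the Lipschitz hypothesis $(\mathbf S_3)$ on $c_l$; a standard Lions-type density/truncation argument then legitimises the energy identity. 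Everything else is routine once this technical point is settled. \meop
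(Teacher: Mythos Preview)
Your proposal is correct and follows essentially the same route as the paper: the same truncated test functions $(c-\gamma_\ell(\theta_E))^+$, $c^-$, $(\theta-\theta_\delta^{\sup})^+$, $(\theta-\theta_\delta^{\inf})^-$ are used, the convective terms are disposed of in the same way, and the reproductivity condition combined with the mass constraint (\ref{eq19})--(\ref{eq11}) (resp.\ the boundary condition (\ref{eq20})) closes the argument. Your additional remark on the $L^{4/3}((H^1)')$ regularity of $\partial_t c,\partial_t\theta$ needed to justify the chain rule is a technical point the paper glosses over, but it does not alter the strategy.
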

\begin{proof}
By definition $f_+=\max\{0, f\}$ and $f_-=\max\{0, -f\}$.
To prove the inequality on the right side of (\ref{eq23}), we take
$\varphi = [c (\cdot,t) - \gamma_\ell(\theta_E)]_+ \in
H^1$ as test function in (\ref{eq15}), by a simple calculation, we obtain
\begin{eqnarray}\label{eq25}
&&\frac{1}{2} \frac{d}{dt} \|[c (t) -\gamma_\ell (\theta_E)]_+\|^2
+ \eta\,\|\nabla  [c (t)-\gamma_\ell( \theta_E)]_+\|^2\nonumber \\
&&\hspace*{1cm}+(\textit{\textbf v} \cdot \nabla c_l(c,\theta),\,[c\,(t) - \gamma_\ell(\theta_E)]_+ )=0.
\end{eqnarray}
From (\ref{eq1}), if $c (t) \geq\gamma_\ell (\theta_E)$, then $(c,\theta)\in \Theta_+$ and $c_l$ is constant, therefore
$$(\textit{\textbf v}\cdot \nabla c_l(c, \theta ),\,[c(t)-\gamma_\ell (\theta_E)]_+) = 0.$$
By integrating (\ref{eq25}) with respect to $t$ from $0$ to $T$  and since $c(T)=c(0)$, we get
\begin{equation}\label{eq27}
\eta \int_0^T \|\nabla [c (t)-\gamma_\ell(\theta_E)]_+\|^2 dt = 0,
\end{equation}
and we conclude that $ [c (x,t) - \gamma_\ell(\theta_E)]_+ = M$, being $M$ a non-negative constant. 
By definition, $M= 0$ or $M=c (x,t) - \gamma_\ell(\theta_E)$, then if $M=c (x,t) - \gamma_\ell(\theta_E)$ 
we have $c (x,t)=M +\gamma_\ell(\theta_E)$, so from (\ref{eq19}) and by integrating, we deduce that 
$c_g - \gamma_\ell(\theta_E)|\Omega| = M\, |\Omega| > 0$, thus $c_g > \gamma_\ell(\theta_E)|\Omega|$ in 
contradiction with (\ref{eq11}). Therefore, $[c (t)- \gamma_\ell(\theta_E)]_+=0$
which implies that $c (t)- \gamma_\ell (\theta_E)\leq 0$, obtaining the upper bound of (\ref{eq23}).

To prove that $c$ is not negative we use the test function $ \varphi = [c (\cdot,t)]_- \in H^1$, and 
from (\ref{eq15}) similarly as to (\ref{eq27}), we can obtain
\[\eta \int_0^T \|\nabla [c (t)]_-\|^2 dt = 0,\]
which implies $[c (x,t)]_- = M_1$ being $M_1$ a non-negative constant.
By definition $M_1 = 0$  or $M_1 = - c(x,t)$, then if $M_1 = -c(x,t)$ from  (\ref{eq19}), we get $ c_g = - M_1 \, 
|\Omega| < 0$ in contradiction with (\ref{eq11}). Thus, $[c (x,t)]_- = 0$ and the lower bound of (\ref{eq23}) is proved.

To prove the left side inequality of  (\ref{eq24}), we choose $\psi =[\theta (\cdot,t)-\theta_\delta^{sup} ]_+ 
\in H_\theta$ as function test in (\ref{eq16}) and taking into account that
$$(\textit{\textbf v}\cdot \nabla \theta, [\theta-\theta_\delta^{sup} ]_+ )=(\textit{\textbf v}
\cdot \nabla  [\theta-\theta_\delta^{sup} ]_+, [\theta-\theta_\delta^{sup} ]_+ )=0,$$
we obtain
\begin{equation}\label{eq28}
\frac{d}{dt}\|[\theta\, (t)-\theta_\delta^{sup}]_+ \|^2 
+ 2\,\kappa \|\nabla [\theta\, (t)-\theta_\delta^{sup}]_+ \|^2=0.
\end{equation}
Since $ \theta (T)= \theta (0)$ a.e. in $\Omega$, by integrating (\ref{eq28}) with respect to $t$ from $0$ to $T$, we get
\[\int_0^T \|\nabla  [\theta (t)-\theta_\delta^{sup}]_+\|^2 dt = 0.\]
Hence, taking into account (\ref{eq20}), we can conclude that $[\theta(x,t)-\theta_\delta^{sup}]_+ = 0$ and 
the upper bound of (\ref{eq24}) is obtained. The lower bound of (\ref{eq24}) is proved similarly, using the
test function $\psi=[\theta (\cdot,t)-\theta_\delta^{inf}]_ -\in H_\theta$.
\end{proof}

\section{The regularized mathematical problem}
Observe that the Carman-Kozeny term given in (\ref{eq5}), becomes discontinuous when $x \in \Omega_s$ 
since $f_s(c(x,t), \theta(x,t))= 1$. Then, to avoid this singularity and to consider the system defined 
in whole the domain $\Omega$, we define a family of regularized problems, which are obtained
by modifying the Carman-Kozeny term, as follows:
\begin{equation}\label{eq29}
F_i^\epsilon \,(c_\epsilon, \theta_\epsilon)=C_0\frac{f_s^2(c_\epsilon, \theta_\epsilon)}{(1-f_s(c_\epsilon,
\theta_\epsilon)+\epsilon)^3}\quad \forall\,\epsilon \in (0,1]\,.
\end{equation}
More precisely, the family of the regularized associated problems is defined as: Find the functions 
$ (\, c_\epsilon, \,\theta_\epsilon, \, p_\epsilon, \, \textit{\textbf{v}}_\epsilon\,) : \, \mathcal{Q} 
\rightarrow \mathbb{R}^6$, such that
\begin{eqnarray}
&&\frac{\partial c_\epsilon}{\partial t}-\eta \, \Delta c_\epsilon+\textit{\textbf v}_\epsilon 
\cdot \nabla c_l (c_\epsilon,\theta_\epsilon)=0 \quad {\rm in} \ \mathcal{Q},\label{eq30}\\
&& \ \frac{\partial c_\epsilon}{\partial \textit{\textbf n}}=0 \quad {\rm on} \ \Gamma \times (0,T), 
\qquad \int_\Omega c_\epsilon (x,t) \, dx= c_g \quad {\rm in} \ (0,T),\label{eq30a} \\
&& \ c_\epsilon(x,0)= c_0(x) \quad {\rm in} \ \Omega,\label{eq30b}
\end{eqnarray}
\begin{eqnarray}
&&\frac{\partial \theta_\epsilon}{\partial t}-\kappa \Delta \theta_\epsilon+ \rho\, C_p\,\textit{\textbf v}_\epsilon 
\cdot \nabla \theta_\epsilon = 0 \quad {\rm in} \ \mathcal{Q}, \label{eq31}\\
&& \ \theta_\epsilon=\theta_\delta \quad {\rm on} \ \Gamma_{bt}\times (0,T),\qquad \frac{\partial \theta_\epsilon}
{\partial \textit{\textbf n}}= 0 \quad {\rm on} \ \Gamma_v \times (0,T), \label{eq31a} \\
&& \ \theta_\epsilon(x,0)=\theta_0(x) \quad {\rm in} \ \Omega,\label{eq31b}
\end{eqnarray}
\begin{eqnarray}
&&\hspace*{-.6cm} \frac{\partial \textit{\textbf v}_\epsilon}{\partial t}-2 \nu\, {\rm div}\, 
e(\textit{\textbf v}_\epsilon) + \rho\,(\textit{\textbf v}_\epsilon \cdot \nabla )
\textit{\textbf v}_\epsilon+F_i^\epsilon (c_\epsilon, \theta_\epsilon) \, \textit{\textbf v}_\epsilon
+ \nabla p_\epsilon = \mathbf{F}_e (c_\epsilon, \theta_\epsilon) \ {\rm in} \ \mathcal{Q},\label{eq32}\\
&&\hspace*{-.6cm} \ {\rm div}\, \textit{\textbf v}_\epsilon = 0 \quad {\rm in} \ \mathcal{Q}, \qquad \qquad 
\qquad \textit{\textbf v}_\epsilon= \mathbf{0} \quad {\rm on} \ \Gamma_{bt} \times (0,T),\label{eq32a}\\
&&\hspace*{-.6cm} \ \textit{\textbf v}_\epsilon \cdot \textit{\textbf n} = 0 \quad {\rm on}
\ \Gamma_v \times (0,T), \qquad \sigma (\textit{\textbf v}_\epsilon, p_\epsilon)\, \textit{\textbf \textit{\textbf n}}
\wedge \textit{\textbf n}=0 \quad {\rm on} \ \Gamma_v\times (0,T),\label{eq32b}\\
&&\hspace*{-.6cm} \ \textit{\textbf v}_\epsilon(x,0)= \textit{\textbf v}_0(x) \quad {\rm in} \ \Omega.\label{eq32c}
\end{eqnarray}

In the same way as Definition \ref{Definition 3.4}, we give the following definition of reproductive regularized weak 
solution of the problem (\ref{eq8})-(\ref{eq10c}).
\begin{definition} \label{Definition 4.1} 
We say that the triplet of functions $(c_\epsilon(x,t),$ $\,\theta_\epsilon(x,t),\, \textit{\textbf v}_\epsilon(x,t))$ 
is a regularized reproductive weak solution of the evolutive problem (\ref{eq8})-(\ref{eq10c}) at time $T$ if and 
only if there exist $c_0,\theta_0\in\,L^2$ and $\textit{\textbf v}_0\in\,\textit{\textbf H}$ such that
the following relations hold:
\begin{eqnarray}
&& c_\epsilon(x,T)=c_0(x),\,\theta_\epsilon(x,T)=\theta_0(x),\, \textit{\textbf v}_\epsilon(x,T)
=\textit{\textbf v}_0(x) \mbox{  a.e. in } \Omega,\\
&& c_\epsilon \in L^2(H^1)\cap L^\infty(L^2), \quad c_\epsilon(x,0)=c_\epsilon(x,T) \mbox{  in } \Omega,\\
&&\theta_\epsilon \in L^2(H^1)\cap L^\infty(L^2), \quad\theta_\epsilon(x,0)=\theta_\epsilon(x,T) \mbox{  in }\Omega,\\
&&\textit{\textbf v}_\epsilon \in L^2(\textit{\textbf H}_v)\cap L^\infty(\textit{\textbf H}), 
\quad\textit{\textbf v}_\epsilon(x,0)=\textit{\textbf v}_\epsilon(x,T) \mbox{  in }\Omega,\\
&&\int_\Omega c_\epsilon (x,t)\,dx = c_g \ \mbox{ in } (0,T),\quad
\theta_\epsilon (\cdot,t)- \theta_\delta(\cdot,t) \in H_\theta \ \mbox{ in } (0,T),
\end{eqnarray}
and $c_\epsilon,\,\theta_\epsilon$ and $\textit{\textbf v}_\epsilon$
satisfy the variational equations a.e. in (0,T):
\begin{eqnarray}
&&(\frac{\partial c_\epsilon}{\partial t}, \,\varphi) +\eta\, (\nabla
\, c_\epsilon,\, \nabla \varphi)+(\textit{\textbf v}_\epsilon \cdot
\nabla c_l(c_\epsilon, \theta_\epsilon),\, \varphi) = 0,\label{eq33}\\
&&(\frac{\partial \theta_\epsilon}{\partial t},\, \psi) +\kappa\,(\nabla
\theta_\epsilon,\, \nabla \psi) + \rho\, C_p\,(\textit{\textbf v}_\epsilon
\cdot \nabla \theta_\epsilon, \, \psi) = 0,\label{eq34}\\
&&(\frac{\partial \textit{\textbf v}_\epsilon} {\partial t}, \,
\textit{\textbf w}) + \nu \,(\nabla \textit{\textbf v}_\epsilon,
\, \nabla \textit{\textbf w}) +\rho \,(\textit{\textbf v}_\epsilon \cdot
\nabla \textit{\textbf v}_\epsilon,\, \textit{\textbf w})
+(F_i^\epsilon(c_\epsilon,\theta_\epsilon)\,\textit{\textbf v}_\epsilon,
\textit{\textbf w})\hspace*{1.5cm}\nonumber \\
&&\hspace*{1.5cm} =(\mathbf{F}_e(c_\epsilon,\theta_ \epsilon),\,
\textit{\textbf w}),\label{eq35}
\end{eqnarray}
for all $ \ \varphi \in H^1, \ \psi \in H_\theta, \ \textit{\textbf w} \in \textit{\textbf H}_v$.
\end{definition}
\begin{remark}
A weak reproductive solution of the regularized problem (\ref{eq30})-(\ref{eq32c}) is a reproductive regularized 
weak solution of the problem (\ref{eq8})-(\ref{eq10c}). Thus, in what follows, we refer to reproductive regularized 
weak solution of the problem (\ref{eq8})-(\ref{eq10c}) as reproductive weak solution of the problem
(\ref{eq30})-(\ref{eq32c}).
\end{remark}
\begin{remark}
Applying the Maximal Principle to the diffusion and heat equations,
we obtain the inequalities
\begin{eqnarray}
\hspace*{2.5cm} 0\, \leq  & c_\epsilon (x,t)&  \leq \, \gamma_{\ell}(\theta_E)
\quad \mbox{ a.e. in }\mathcal{Q}\,,\label{eq44}\\
\theta_\delta^{\inf} \ \leq \ & \theta_\epsilon(x,t) & \  \leq \
\theta_\delta^{\sup} \quad \mbox{a.e. in }\mathcal{Q}.\label{eq45}
\end{eqnarray}
The proof of (\ref{eq44}) and (\ref{eq45}) is similar to Proposition \ref{Proposition 3.5}.
\end{remark}
\section{Weak reproductive solution of the regularized problem}
In this section, we prove the existence of at least one weak reproductive solution of the problem 
(\ref{eq30})-(\ref{eq32c}), by characterizing it as the limit of a sequence of approximated solutions 
defined on finite dimensional spaces. There follows, we state this result and the proof is done at the 
end of this section.
\begin{theorem}\label{Theorem 5.1} 
Under the hypotheses ({\bf S$_1$}), ({\bf S$_2$}) and ({\bf S$_3$}) the regularized problem (\ref{eq30})-(\ref{eq32c}) 
admits at least one weak reproductive solution.
\end{theorem}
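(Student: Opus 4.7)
The strategy is a Galerkin approximation combined with a Brouwer fixed-point argument applied to a Poincaré-type return map, followed by passage to the limit by compactness.

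First I would introduce Galerkin bases adapted to the three equations. For the velocity I take the first $N$ elements of a basis of $\textit{\textbf H}_v$ (for instance the one provided by Lemma \ref{Lemma 3.3}, or eigenfunctions of the associated Stokes-type operator with the mixed conditions (\ref{eq32a})--(\ref{eq32b})); for the temperature I use the extension of $\theta_\delta$ given by hypothesis ({\bf S$_2$}) in order to work with $\theta_\epsilon^N-\theta_\delta \in H_\theta$, spanned by a basis of $H_\theta$; for the concentration I use a basis of $H^1(\Omega)$ written as $c_g/|\Omega|$ plus zero-mean perturbations so that (\ref{eq30a}) is enforced from the outset.

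For fixed $N$, the Galerkin projection of (\ref{eq33})--(\ref{eq35}) is a system of ODEs whose right-hand side is locally Lipschitz: the regularization (\ref{eq29}) makes $F_i^\epsilon$ bounded and smooth in $(c,\theta)$, $c_l\in W^{1,\infty}$ by ({\bf S$_3$}), and the trilinear term is polynomial. For any initial datum $U_0=(c_0^N,\theta_0^N,\textit{\textbf v}_0^N)$, Cauchy--Lipschitz yields a local solution, extended to $[0,T]$ via a priori bounds obtained by testing with the solutions themselves; the antisymmetry of the trilinear form combined with Lemmas \ref{Lemma 3.1} and \ref{Lemma 3.2} produces estimates in $L^\infty(L^2)\cap L^2(H^1)$. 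I then define the Poincaré map $P_N: U_0\mapsto (c_\epsilon^N(T),\theta_\epsilon^N(T),\textit{\textbf v}_\epsilon^N(T))$, which is continuous by continuous dependence of ODE solutions on data, and show that it sends a sufficiently large closed ball into itself by means of an estimate of the form
\[
\|U(T)\|^2 \leq e^{-\lambda T}\|U_0\|^2 + C_*,
\]
with $\lambda>0$ coming from the Poincaré inequalities in $\textit{\textbf H}_v$, $H_\theta$ and for zero-mean scalars, and $C_*$ depending only on the data (note that $|c_l|\leq c_E$ keeps $\|\mathbf{F}_e\|$ under control and $F_i^\epsilon\geq 0$ only adds dissipation). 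Brouwer's fixed-point theorem then yields a reproductive Galerkin solution for each $N$.

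Passage to the limit $N\to\infty$ is then standard: the bounds above are uniform in $N$, the time derivatives are controlled in suitable dual spaces, and Aubin--Lions compactness yields strong convergence in $L^2(L^2)$, which is enough to pass to the limit in the nonlinear terms (continuity of $c_l$ and of $F_i^\epsilon$ in $(c,\theta)$, plus a weak-strong argument for the convective terms); the reproductive identity $U(0)=U(T)$ survives because the Galerkin solutions converge at each fixed time in the weak topology of $L^2$, in particular at $t=0$ and $t=T$. The main obstacle I foresee is closing the self-map estimate for $P_N$ in the presence of the full coupling: the cross terms $\textit{\textbf v}_\epsilon^N\cdot\nabla c_l(c_\epsilon^N,\theta_\epsilon^N)$ and $\textit{\textbf v}_\epsilon^N\cdot\nabla\theta_\epsilon^N$, which do not vanish by antisymmetry after subtracting the lifts, must be absorbed by the diffusive terms without destroying the exponential decay; the right choice of relative weights on the three energies together with careful use of Young's inequality will, I expect, be what makes the estimate close, after which the rest of the argument is routine.
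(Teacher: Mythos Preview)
Your proposal is correct and follows essentially the same architecture as the paper: lift $\theta_\epsilon$ by $\theta_\delta$ and $c_\epsilon$ by $c_g/|\Omega|$, Galerkin-approximate, derive an exponential estimate of the form $e^{\beta T}\|Z^k(T)\|^2 \le \|Z^k(0)\|^2 + C$ (the paper's Proposition~5.4, obtained exactly by absorbing the cross terms $(\textit{\textbf v}^k_\epsilon\cdot\nabla c_l,\tilde c^k_\epsilon)$ and $(\textit{\textbf v}^k_\epsilon\cdot\nabla\theta_\delta,\tilde\theta^k_\epsilon)$ into the diffusion via Young's inequality, so your ``main obstacle'' is indeed routine), apply a fixed-point theorem to the return map, and pass to the limit by compactness. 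The only differences are in the choice of off-the-shelf tools: the paper invokes Leray--Schauder homotopy rather than Brouwer for the fixed point (your self-map estimate is precisely what makes either work), and for strong $L^2(L^2)$ convergence it uses an Ascoli--Arzel\`a argument on the Galerkin coefficients combined with Friedrich's Lemma (Lemmas~5.6--5.7) instead of time-derivative bounds and Aubin--Lions; your route is equally valid and somewhat more direct.
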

For the sake of simplicity we denote $\tilde \theta_\epsilon = \theta_\epsilon - \theta_\delta$ and $\tilde c_\epsilon 
=c_\epsilon - c_g |\Omega|^{-1}$, then to find a weak reproductive solution of the problem (\ref{eq30})-(\ref{eq32c}) 
is equivalent to: Find the functions $(\,\tilde c_\epsilon,\, \tilde\theta_\epsilon,\, \textit{\textbf{v}}_\epsilon\,) 
\in L^2(H^1)\times L^2(H_\theta) \times L^2(\textit{\textbf{H}}_{ v}\,)$,
such that the following equations are satisfied in the distribution sense in $(0, T)$,
\begin{eqnarray}
&&\hspace*{-.6cm}(\frac{\partial \tilde c_\epsilon}{\partial t},\varphi)+ \eta\,(\nabla \tilde c_\epsilon, 
\nabla \varphi)+(\textit{\textbf v}_\epsilon\cdot \nabla c_l (\textit{\textbf z}_\epsilon),
\varphi) = 0, \label{eq46}\\
&&\hspace*{-.6cm}(\frac{\partial \tilde\theta_\epsilon}{\partial t},\psi)+\kappa\,(\nabla (\tilde\theta_\epsilon 
+\theta_\delta),\nabla \psi)+\rho\, C_p\,(\textit{\textbf v}_\epsilon\cdot\nabla(\tilde\theta_\epsilon 
+\theta_\delta), \psi) = -(\frac{\partial\theta_\delta}{\partial t}, \psi), \label{eq47}\\
&&\hspace*{-.6cm}(\frac{\partial\textit{\textbf v}_\epsilon}{\partial t}, \textit{\textbf w})
+ \nu\,(\nabla \textit{\textbf v}_\epsilon,\nabla \textit{\textbf w})+\rho\, (\textit{\textbf v}_\epsilon
\cdot \nabla \textit{\textbf v}_\epsilon,\, \textit{\textbf w})+(F_i^\epsilon(\textit{\textbf z}_\epsilon)
\,\textit{\textbf v}_\epsilon,\textit{\textbf w})=(\mathbf{F}_e(\textit{\textbf z}_\epsilon),
\textit{\textbf w}), \label{eq48}
\end{eqnarray}
for all $ \ \varphi \in H^1, \ \psi \in H_\theta, \ \textit{\textbf w}\in \textit{\textbf H}_v$, where 
$\textit{\textbf z}_\epsilon=(\tilde c_\epsilon+c_g |\Omega|^{-1}, \tilde\theta_\epsilon + \theta_\delta)$ and
\begin{equation}\label{eq49}
\int_\Omega \tilde c_\epsilon (x,t)\, d x = 0.
\end{equation}
Thus, if we find $(\, \tilde c_\epsilon, \, \tilde \theta_\epsilon,\,\textit{\textbf v}_\epsilon \,)$ reproductive 
solution of (\ref{eq46})-(\ref{eq49}), we have that $(c_\epsilon=\tilde c_\epsilon+ c_g |\Omega|^{-1}, \, 
\theta_\epsilon =\tilde\theta_\epsilon + \theta_\delta, \, \textit{\textbf v}_\epsilon)$ is a weak
reproductive solution of (\ref{eq30})-(\ref{eq32c}).

To prove the existence of at least a weak reproductive solution of the problem (\ref{eq46})-(\ref{eq49}), we consider
the Hilbert orthogonal basis $\{\varphi^i(x)\}^{\infty}_{i=1}$ of $H^1$, $\{\psi^i(x) \}^\infty_{i=1}$ of $H_\theta$ 
and $\{\textit{\textbf w}^i(x)\}^{\infty}_{i=1}$ of $\textit{\textbf H}_{v}$, which are assumed orthonormal in $L^2$
(only to introduce certain simplification in the subsequent treatment).

Let $H^k$ be the subspace of $H^1$ spanned by $\{\varphi^1(x),\ldots, \varphi^k(x)\}$, $H_\theta^k$ be the subspace 
of $H_\theta$ spanned by $\{\psi^1(x),\ldots, \psi^k(x)\}$ and $\textit{\textbf H}_v^k$ be the subspace of
$\textit{\textbf H}_v $ spanned by $\{\textit{\textbf w}^1(x), \ldots, \textit{\textbf w}^k(x)\}$, respectively. 
For every $k \geq 1$, we define approximations $\tilde c^k_\epsilon(x,t)$, $\tilde\theta^k_\epsilon(x,t)$  
and $\textit{\textbf v}^k_\epsilon(x,t)$ of $\tilde c_\epsilon(x,t)$, $\tilde\theta_\epsilon(x,t)$ and
$\textit{\textbf v}_\epsilon(x,t)$ respectively, by means of the following finite expansion:
\begin{eqnarray}
&&\tilde c^k_\epsilon =\sum^k_{i=1} c_{ki}^\epsilon(t)\,\varphi^i(x),
\quad \tilde\theta^k_\epsilon = \sum^k_{i=1}d_{ki}^\epsilon(t)\,
\psi^i(x), \quad \textit{\textbf v}^k_\epsilon= \sum^k_{i=1}
e_{ki}^\epsilon(t)\, \textit{\textbf w}^i(x),\qquad \label{eq50}
\end{eqnarray}
for $ t \in (0,T)$, where the coefficients $c_{ik}^\epsilon (t), \ d_{ik}^\epsilon (t) $ and $e_{ik}^\epsilon (t)$ 
will be calculated in such way that $\tilde c^k_\epsilon, \tilde\theta^k_\epsilon$ and $\textit{\textbf{v}}^k_\epsilon$ 
solve the following approximations of system (\ref{eq46})-(\ref{eq49}):
\begin{eqnarray}
&&(\frac{\partial \tilde c^k_\epsilon}{\partial t}, \varphi^j )
+\eta\,(\nabla  \tilde c^k_\epsilon, \nabla  \varphi^j)
+(\textit{\textbf v}^k \cdot \nabla
c_l(\textit{\textbf z}_\epsilon^k), \varphi^j) = 0, \label{eq51}\\
&&(\frac{\partial\,\tilde\theta^k_\epsilon}{\partial t}, \psi^j)
+\kappa\,(\nabla \tilde\theta^k_\epsilon, \nabla \psi^j)+\rho\,
C_p\,( \textit{\textbf v}^k_\epsilon \cdot \nabla \tilde
\theta^k_\epsilon, \psi^j)+\rho\,C_p\,(\textit{\textbf v}^k_\epsilon
\cdot \nabla \theta_\delta, \psi^j)\nonumber \\
&&\hspace*{2cm}=-\kappa\,(\nabla \theta_\delta, \nabla \psi^j)
- (\frac{\partial\theta_\delta} {\partial t}, \psi^j),\label{eq52}\\
&&(\frac{\partial\,\textit{\textbf v}^k_\epsilon}{\partial t},
\textit{\textbf w}^j)+\nu\, (\nabla \textit{\textbf v}^k_\epsilon,
\nabla \textit{\textbf w}^j)+\rho \,(\textit{\textbf v}^k_\epsilon
\cdot \nabla \textit{\textbf v}^k_\epsilon, \textit{\textbf w}^j)
+(F_i^\epsilon (\textit{\textbf z}_\epsilon^k)\,\textit{\textbf v}^k_\epsilon,
\textit{\textbf w}^j)\nonumber \\
&&\hspace*{2cm}=({\bf F}_e (\textit{\textbf z}_\epsilon^k),\textit{\textbf w}^j),
\label{eq53}
\end{eqnarray}
for all $ \,\varphi^j \in H^k, \ \psi^j \in H_\theta^k, \ \textit{\textbf w}^j \in \textit{\textbf H}_{v}^k$, with
$\textit{\textbf z}_\epsilon^k = (\tilde c^k_\epsilon+ c_g |\Omega|^{-1}, \tilde \theta^k_\epsilon+\theta_\delta)$ and
\begin{eqnarray}
&&\int_\Omega \tilde c^k_\epsilon (x,t)\, dx = 0,\label{eq54}\\
&&\tilde c^k_\epsilon(x,0)= \tilde c_0^k(x), \quad
\tilde\theta^k_\epsilon(x,0) = \tilde \theta_0^k(x), \quad
\textit{\textbf v}^k_\epsilon(x,0)=\textit{\textbf v}_0^k(x)
\qquad {\rm in} \ \Omega,\qquad \label{eq55}
\end{eqnarray}
where
\[\tilde c_0^k(x) \rightarrow \tilde c_0(x)  \mbox{ in } \ L^2, \quad \tilde\theta_0^k(x) \rightarrow 
\tilde\theta_0(x) \mbox{ in } \ L^2,\quad \textit{\textbf v}_0^k(x) \rightarrow
\textit{\textbf v}_0(x) \ \mbox{ in } \ \textit{\textbf H},\]
as $k \rightarrow \infty$ and $\tilde c_0(x)=c_0(x) - c_g |\Omega|^{-1}$ and
$\tilde \theta_0(x)=\theta_0(x)- \theta_\delta (x,0)$.

\begin{remark}\label{remark5.2}
If we denote $ Z^k (x,t) = (\tilde c^k_\epsilon (x,t),\tilde\theta^k_\epsilon(x,t), \textit{\textbf v}^k_\epsilon(x,t))$,
because the orthonormality of $\varphi^i,\psi^i$ and $\textit{\textbf w}^i$ in ${\textbf L}^2$, then
\[\|Z^k (t)\|^2 = \sum_{i=1}^k (\,|c^\epsilon_{ki}(t)|^2
+|d^\epsilon_{ki}(t)|^2+|e^\epsilon_{ki}(t)|^2\,).\]
Since the system (\ref{eq51})-(\ref{eq53}) with the conditions (\ref{eq55}) depends analytically on
\[ {\bf G}(t)=(\, c^\epsilon_{k1}(t), \, \cdots, \, c^\epsilon_{kk}(t), \, d^\epsilon_{k1}(t),\,\cdots,
\, d^\epsilon_{kk}(t),\,e^\epsilon_{k1} (t),\,\cdots,\, e^\epsilon_{kk}(t) \, ),\]
to prove that the approximating solutions $(\tilde c^k_\epsilon,\tilde\theta^k_\epsilon, 
\textit{\textbf v}^k_\epsilon)$ are defined uniquely for all $ t \geq 0$, it is sufficient to prove
that the nonlinear differential system (\ref{eq51})-(\ref{eq53}) with the conditions (\ref{eq55}) has 
a maximal solution ${\bf G}(t)$ defined on some interval $[0,t_k]$. If $t_k < T$ then $\|Z^k (x,t)\|^2$ 
must tend to $ +\infty$ as $t \rightarrow t_k$, but the a priori estimates we shall prove below 
(see Proposition \ref{Proposition 5.3}) show that this is not possible and therefore, $t_k = T$.
\end{remark}
Now, in order to prove that the system (\ref{eq51})-(\ref{eq55}) has at least one reproductive solution, 
previously we get some estimates in the following prepositions.
\begin{proposition}\label{Proposition 5.3} 
Let $(\tilde c_\epsilon^k,\tilde\theta_\epsilon^k, \textit{\textbf v}_\epsilon^k)$ be a 
solution of (\ref{eq51})-(\ref{eq55}). Then, for $ t \in [0,T]$ the following estimates holds
\begin{equation}\label{eq56}
\|Z^k(t)\|^2+\int_0^t (\,\eta\, \|\nabla  \tilde c^k_\epsilon(s)\|^2+\kappa\,\|\nabla 
\tilde\theta^k_\epsilon(s)\|^2 + \nu\, \|\nabla\textit{\textbf v}^k_\epsilon(s)\|^2 \,)\, ds \leq C.
\end{equation}
where $C$ does not depend on $\epsilon$ and $k$.
\end{proposition}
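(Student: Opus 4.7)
The plan is to derive an energy identity for each of the three Galerkin equations (\ref{eq51}), (\ref{eq52}), (\ref{eq53}), combine them, and close the estimate with Gronwall's lemma. First, multiplying (\ref{eq51}) by $c^{\epsilon}_{kj}(t)$ and summing over $j=1,\dots,k$ yields $\tfrac12 \frac{d}{dt}\|\tilde c^k_\epsilon\|^2 + \eta\|\nabla \tilde c^k_\epsilon\|^2 + (\textit{\textbf v}^k_\epsilon\cdot\nabla c_l(\textit{\textbf z}^k_\epsilon),\tilde c^k_\epsilon)=0$. Because $\textit{\textbf v}^k_\epsilon\in \textit{\textbf H}_v$ is divergence-free with $\textit{\textbf v}^k_\epsilon\cdot \textit{\textbf n}=0$ on $\Gamma$, I integrate by parts to obtain $(\textit{\textbf v}^k_\epsilon\cdot\nabla c_l,\tilde c^k_\epsilon) = -(\textit{\textbf v}^k_\epsilon\cdot \nabla \tilde c^k_\epsilon, c_l)$, and since (\textbf{S$_3$}) gives $c_l\in W^{1,\infty}(\mathbb{R}^2)$ (in particular $\|c_l\|_{L^\infty}\leq C$), Young's inequality yields $|(\textit{\textbf v}^k_\epsilon\cdot\nabla c_l,\tilde c^k_\epsilon)|\leq \tfrac{\eta}{2}\|\nabla \tilde c^k_\epsilon\|^2 + C\|\textit{\textbf v}^k_\epsilon\|^2$.

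Next, multiplying (\ref{eq52}) by $d^\epsilon_{kj}(t)$ and summing produces $\tfrac12\frac{d}{dt}\|\tilde\theta^k_\epsilon\|^2+\kappa\|\nabla\tilde\theta^k_\epsilon\|^2 = -\kappa(\nabla\theta_\delta,\nabla\tilde\theta^k_\epsilon) - \rho C_p(\textit{\textbf v}^k_\epsilon\cdot\nabla\theta_\delta,\tilde\theta^k_\epsilon) - (\partial_t\theta_\delta,\tilde\theta^k_\epsilon)$, where I used the cancellation $(\textit{\textbf v}^k_\epsilon\cdot\nabla\tilde\theta^k_\epsilon,\tilde\theta^k_\epsilon)=0$. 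The right-hand side is bounded by standard interpolation and Sobolev inequalities (listed after Lemma \ref{Lemma 3.3}), splitting the mixed $\textit{\textbf v}_\epsilon^k\cdot\nabla\theta_\delta$ term by H\"older and Young so that small multiples of $\|\nabla\tilde\theta^k_\epsilon\|^2$ and $\nu\|\nabla\textit{\textbf v}^k_\epsilon\|^2$ are absorbed, with the remainder controlled by a constant times $\|\textit{\textbf v}^k_\epsilon\|^2+\|\tilde\theta^k_\epsilon\|^2$ plus integrable data norms coming from $\theta_\delta\in H^1(H^1)\cap L^2(H^1)$ provided by (\textbf{S$_2$}).

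For the velocity equation, multiplying (\ref{eq53}) by $e^\epsilon_{kj}(t)$ and summing gives $\tfrac12\frac{d}{dt}\|\textit{\textbf v}^k_\epsilon\|^2+\nu\|\nabla\textit{\textbf v}^k_\epsilon\|^2+(F_i^\epsilon(\textit{\textbf z}^k_\epsilon)\textit{\textbf v}^k_\epsilon,\textit{\textbf v}^k_\epsilon) = (\mathbf{F}_e(\textit{\textbf z}^k_\epsilon),\textit{\textbf v}^k_\epsilon)$, where the trilinear term vanishes. The crucial observation for $\epsilon$-independence is that $F_i^\epsilon\geq 0$, so the Carman--Kozeny term is nonnegative and can be discarded. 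Using (\ref{eq4}) together with $\|c_l\|_{L^\infty}\leq C$ yields $|\mathbf{F}_e(\textit{\textbf z}^k_\epsilon)|\leq C(1+|\tilde\theta^k_\epsilon|+|\theta_\delta|)$, whence $|(\mathbf{F}_e,\textit{\textbf v}^k_\epsilon)|\leq C+C\|\tilde\theta^k_\epsilon\|^2+C\|\textit{\textbf v}^k_\epsilon\|^2$.

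Adding the three identities and absorbing the gradient terms on the left produces $\frac{d}{dt}\|Z^k(t)\|^2 + \eta\|\nabla \tilde c^k_\epsilon\|^2 + \kappa\|\nabla\tilde\theta^k_\epsilon\|^2 + \nu\|\nabla\textit{\textbf v}^k_\epsilon\|^2 \leq g(t) + C\|Z^k(t)\|^2$, where $g\in L^1(0,T)$ is built from the $\theta_\delta$-norms. Integrating from $0$ to $t$ and applying Gronwall's inequality (Lemma \ref{Lemma 3.2}) gives the claimed bound, noting that $\|Z^k(0)\|^2$ is uniformly bounded since $\tilde c_0^k,\tilde\theta_0^k,\textit{\textbf v}_0^k$ converge in $L^2$. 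The main obstacle is verifying that all estimates can be made independent of both $\epsilon$ and $k$ simultaneously; the $\epsilon$-independence relies entirely on the sign of $F_i^\epsilon$, while the $k$-independence follows from the chosen test functions lying in the Galerkin subspace, and the delicate point is splitting the $(\textit{\textbf v}^k_\epsilon\cdot\nabla\theta_\delta,\tilde\theta^k_\epsilon)$ coupling so that the small residual is absorbed by $\nu\|\nabla\textit{\textbf v}^k_\epsilon\|^2$ rather than swallowing a factor that blows up with $\epsilon$ or $k$.
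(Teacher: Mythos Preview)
Your proposal is correct and follows essentially the same route as the paper: test each Galerkin equation with the corresponding approximation, drop the Carman--Kozeny term by its sign, integrate the $c_l$-convection term by parts using $c_l\in L^\infty$ from (\textbf{S$_3$}), bound the remaining cross terms by H\"older/Young, and close with Gronwall. The only cosmetic difference is that the paper invokes the pointwise bound (\ref{eq45}) when estimating $(\textit{\textbf v}^k_\epsilon\cdot\nabla\theta_\delta,\tilde\theta^k_\epsilon)$ and $(\mathbf{F}_e,\textit{\textbf v}^k_\epsilon)$, whereas you control these by $\|\tilde\theta^k_\epsilon\|$ and absorb via Gronwall --- your variant is in fact slightly cleaner, since (\ref{eq45}) is stated for the continuous regularized solution rather than the Galerkin approximation.
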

\begin{proof} Considering in (\ref{eq51})-(\ref{eq53}), $\varphi^j=\tilde c^k_\epsilon\,(t),\,\psi^j 
= \tilde\theta^k_\epsilon\,(t)$ and $\textit{\textbf w}^j= \textit{\textbf v}^k_\epsilon\,(t)$, we have
\begin{eqnarray}
&&\frac{1}{2}\frac{d}{dt}\|\tilde c^k_\epsilon(t)\|^2+\eta \,
\|\nabla\tilde c^k_\epsilon(t)\|^2+(\textit{\textbf v}^k_\epsilon (t)
\cdot \nabla c_l(\textit{\textbf z}_\epsilon^k), c^k_\epsilon(t))=0,\label{eq57}\\
&&\frac{1}{2}\frac{d}{dt}\|\tilde\theta^k_\epsilon(t)\|^2
+\kappa\,\|\nabla\tilde\theta^k_\epsilon(t)\|^2+ \rho\, C_p\,
(\textit{\textbf v}^k_\epsilon(t) \cdot\nabla\theta_\delta(t),
\tilde\theta^k_\epsilon(t))\nonumber\\
&&\hspace*{2.2cm}=-\kappa\, (\nabla\theta_\delta(t), \nabla\tilde
\theta^k_\epsilon(t))-(\frac{\partial}{\partial t}\theta_\delta(t),
\tilde\theta^k_\epsilon(t)),\label{eq58}\\
&&\frac{1}{2}\frac{d}{dt}\|\textit{\textbf v}^k_\epsilon(t)\|^2
+\nu\,\|\nabla\textit{\textbf v}^k_\epsilon(t)\|^2+(F_i^\epsilon
(\textit{\textbf z}_\epsilon^k)\,\textit{\textbf v}^k_\epsilon(t),
\textit{\textbf v}^k_\epsilon(t))=({\bf F}_e(\textit{\textbf z}_\epsilon^k),
\textit{\textbf v}^k_\epsilon(t)).\quad\label{eq59}
\end{eqnarray}
Since $\, F_i^\epsilon (\textit{\textbf z}_\epsilon^k) \geq 0$, adding (\ref{eq57})-(\ref{eq59}), we obtain
\begin{eqnarray}\label{eq60}
&&\frac{1}{2}\frac{d}{dt}\|Z^k(t)\|^2+\eta \,
\|\nabla\tilde c^k_\epsilon(t)\|^2+\kappa\, \|\nabla
\tilde\theta^k_\epsilon(t)\|^2+ \nu\,\|\nabla
\textit{\textbf v}^k_\epsilon (t)\|^2 \nonumber \\
&&\hspace*{.5cm}\leq |\,({\bf v}^k_\epsilon(t) \cdot \nabla
c_l(\textit{\textbf z}_\epsilon^k), \tilde c^k_\epsilon(t))\, |
+\rho\, C_p\, |\,(\textit{\textbf v}^k_\epsilon(t)\cdot\nabla
\theta_\delta(t), \tilde\theta^k_\epsilon(t))\,|\nonumber\\
&&\hspace*{1cm} +\kappa\,|\, (\nabla\theta_\delta(t), \nabla
\tilde\theta^k_\epsilon(t))\,|+|\,(\frac{\partial}{\partial t}
\theta_\delta(t),\tilde\theta^k_\epsilon(t))\,|
+|\,({\bf F}_e (\textit{\textbf z}_\epsilon^k),
\textit{\textbf v}^k_\epsilon(t))\,|. \quad 
\end{eqnarray}
Now, by considering ({\bf S$_3$}) together H\"older and Young inequalities, for the first term on the 
right side of inequality (\ref{eq60}), we get
\begin{eqnarray} \label{eq61}
\hspace*{-0.3cm}|(\textit{\textbf v}^k_\epsilon(t)\cdot\nabla c_l(\textit{\textbf z}_\epsilon^k),
\tilde c^k_\epsilon(t))|\nonumber
&=&|(\textit{\textbf v}^k_\epsilon(t) \cdot\nabla\tilde c^k_\epsilon(t),
c_l(\textit{\textbf z}_\epsilon^k))|\nonumber \\
&\leq & C\|\textit{\textbf v}^k_\epsilon(t)\|\|c_l(\textit{\textbf z}_\epsilon^k)\|_{L^\infty}
\|\nabla\tilde c^k_\epsilon(t)\|\nonumber \\
&\leq&\frac{\eta}{2}\,\|\nabla\tilde c^k_\epsilon(t)\|^2+C\,\|\textit{\textbf v}^k_\epsilon(t)\|^2.
\end{eqnarray}
By taking into account (\ref{eq45}) and, using the H\"older and Young inequalities, for the others terms on 
the right side of (\ref{eq60}) we have
\begin{eqnarray}
\rho\, C_p\, |\, (\textit{\textbf v}^k_\epsilon (t)\cdot\nabla
\,\theta_\delta(t), \tilde\theta^k_\epsilon(t))\, |&\leq & C\,
\|\textit{\textbf v}^k_\epsilon (t)\|_{L^6}\|\nabla
\theta_\delta(t)\|\,\|\tilde \theta^k_\epsilon(t)\|_{L^3}\nonumber \\
&\leq &\,\frac{\nu}{2}\,\| \nabla \textit{\textbf v}^k_\epsilon (t)\|^2
+ C\, \|\nabla \theta_\delta(t)\|^2 ,\label{eq62}\\
&&\nonumber\\
\kappa\,|\,(\nabla \theta_\delta(t), \nabla \tilde\theta^k_\epsilon(t))\,|
&\leq &\, C\, \| \nabla \theta_\delta(t)\|^2 +\frac{\kappa}{2}\,\|\nabla
\,\tilde\theta^k_\epsilon(t)\|^2,\label{eq63}\\
&&\nonumber\\
|\,(\frac{\partial}{\partial t} \theta_\delta(t),\tilde\theta^k_\epsilon(t))\,|
&\leq & C\,\|\frac{\partial}{\partial t} \theta_\delta(t)\|^2
+ C\, \|\tilde\theta^k_\epsilon(t)\|^2,\label{eq64}
\end{eqnarray}
considering (\ref{eq45}), ({\bf S$_3$}) and since ${\bf g} \in L^\infty$, we get
\begin{eqnarray}\label{eq66}
|({\bf F}_e(\textit{\textbf z}_\epsilon^k),\textit{\textbf v}^k_\epsilon(t))|
&\leq &C\, \|{\bf g}\|\,\|\tilde\theta^k_\epsilon(t) +
\theta_\delta(t)\|_{L^3}\,\|\textit{\textbf v}^k_\epsilon(t)\|_{L^6}
+C\,\|{\bf g}\|\,\|\textit{\textbf v}^k_\epsilon(t)\|\nonumber \\
&&+ C\, \|{\bf g}\|\,\|c_l(\textit{\textbf z}_\epsilon^k)\|_{L^\infty}
\|\textit{\textbf v}^k_\epsilon(t)\|\nonumber\\
&\leq & C+\frac{\nu}{4}\, \|\nabla \textit{\textbf v}^k_\epsilon(t)\|^2.
\end{eqnarray}
Then, carrying inequalities (\ref{eq61})-(\ref{eq66}) in (\ref{eq60}), we have the inequality
\begin{eqnarray}\label{eq67}
&&\frac{1}{2}\frac{d}{dt}\| Z^k(t)\|^2 + \frac{\eta}{2}\,\|\nabla\tilde c^k_\epsilon(t)\|^2
+\frac{\kappa}{2}\,\|\nabla\tilde\theta^k_\epsilon(t)\|^2+\frac{\nu}{2}\,
\|\nabla\textit{\textbf v}^k_\epsilon(t)\|^2\nonumber \\
&&\hspace*{.5cm}\leq C\,\|Z^k(t)\|^2+C\,\|\theta_\delta(t)\|^2_{H^1}
+ C\,\|\frac{\partial}{\partial t} \theta_\delta(t)\|^2 + C.
\end{eqnarray}
Now, taking into account ({\bf S$_2$}), and by integrating (\ref{eq67}) from $0$ to $t \ (t \in [0,T])$, we obtain
\begin{eqnarray*}
&&\|Z^k(t)\|^2+\int_0^t (\eta\,\|\nabla  \tilde c^k_\epsilon(s)\|^2+\kappa\,\|\nabla \tilde
\theta^k_\epsilon(s)\|^2+\nu\, \|\nabla\textit{\textbf v}^k_\epsilon(s)\|^2 )\, ds\\
&&\qquad \leq C + \|Z^k(0)\|^2 + C \int_0^t\|Z^k(s)\|^2 ds,
\end{eqnarray*}
thus the Gronwall's inequality (Lemma \ref{Lemma 3.2}), implies
\begin{equation}\label{eq68}
\hspace*{-.5cm}\|Z^k(t)\|^2+ \int_0^t (\eta\,\|\nabla \tilde c^k_\epsilon(s)\|^2
+\kappa\,\|\nabla  \tilde\theta^k_\epsilon(s)\|^2+\nu\, \|\nabla
\textit{\textbf v}^k_\epsilon(s)\|^2 )\, ds  \leq C(T),
\end{equation}
and consequently inequality (\ref{eq56}). 
\end{proof}
\begin{proposition}\label{Proposition 5.4}  
Let $(\tilde c_\epsilon^k, \tilde\theta_\epsilon^k, \textit{\textbf v}_\epsilon^k)$ be a solution of
(\ref{eq51})-(\ref{eq55}). Then it is satisfied the estimate
\begin{equation}\label{eq69}
\mbox{\large e}\, ^{\beta T} \, \| Z^k(T)\|^2 \leq \|Z^k(0)\|^2 + C(\beta,T),
\end{equation}
where $\beta = C_\Omega^{-1}\min \{ \eta, \kappa, \nu \}$.
\end{proposition}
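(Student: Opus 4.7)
The plan is to derive a differential inequality of the form
\[
\frac{d}{dt}\|Z^k(t)\|^2+\beta\,\|Z^k(t)\|^2\le g(t),
\]
with $g\in L^1(0,T)$ depending only on the fixed data, and then conclude by the integrating factor $e^{\beta t}$. I start from the same test choices as in Proposition \ref{Proposition 5.3}, namely $\varphi^j=\tilde c^k_\epsilon$, $\psi^j=\tilde\theta^k_\epsilon$ and $\textit{\textbf w}^j=\textit{\textbf v}^k_\epsilon$, which produces the energy identity (\ref{eq60}). The crucial difference with Proposition \ref{Proposition 5.3} is that I cannot tolerate any residual term of the form $C\|Z^k(t)\|^2$ on the right-hand side — it must all be absorbed into the dissipation, so that the final inequality has a genuinely dissipative linear term $\beta\|Z^k\|^2$ on the left rather than needing the classical growth Gronwall lemma.

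The tool that makes this possible is the Poincar\'e inequality with a uniform constant $C_\Omega$, which applies to all three components: to $\tilde c^k_\epsilon$ because of the zero-mean condition (\ref{eq54}), and to $\tilde\theta^k_\epsilon\in H_\theta$ and $\textit{\textbf v}^k_\epsilon\in\textit{\textbf H}_v$ because of the vanishing traces on $\Gamma_{bt}$. Whenever an estimate as in (\ref{eq61})--(\ref{eq66}) would naturally produce an $L^2$ norm on the right, I first apply Poincar\'e to convert it into a gradient norm and then tune the Young parameter so that the piece is dominated by a small fraction of the corresponding dissipation on the left. The $L^\infty$ bounds (\ref{eq44})--(\ref{eq45}), together with hypotheses (\textbf{S$_2$}) and (\textbf{S$_3$}), guarantee that what survives on the right is an integrable function $g(t)$ built from $\|\theta_\delta(t)\|_{H^1}^2$, $\|\partial_t\theta_\delta(t)\|^2$ and constants. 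For any cross term that resists a clean Young balance, the uniform a priori bound $\|Z^k(t)\|^2\le C$ from Proposition \ref{Proposition 5.3} can be invoked to treat it as a constant contribution to $g$.

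After this bookkeeping, a final application of Poincar\'e to the remaining dissipation gives
\[
\eta\|\nabla\tilde c^k_\epsilon\|^2+\kappa\|\nabla\tilde\theta^k_\epsilon\|^2+\nu\|\nabla\textit{\textbf v}^k_\epsilon\|^2\ge C_\Omega^{-1}\min\{\eta,\kappa,\nu\}\,\|Z^k(t)\|^2=\beta\,\|Z^k(t)\|^2,
\]
so that the target inequality is in place. Multiplying by $e^{\beta t}$, integrating on $[0,T]$ and using (\textbf{S$_2$}) to bound $\int_0^T e^{\beta s}g(s)\,ds$, one concludes
\[
e^{\beta T}\|Z^k(T)\|^2\le\|Z^k(0)\|^2+\int_0^T e^{\beta s}g(s)\,ds\le\|Z^k(0)\|^2+C(\beta,T),
\]
which is (\ref{eq69}). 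The main obstacle is precisely the second step: one must verify that the convective coupling $(\textit{\textbf v}^k_\epsilon\cdot\nabla c_l(\textit{\textbf z}^k_\epsilon),\tilde c^k_\epsilon)=-(c_l\,\textit{\textbf v}^k_\epsilon,\nabla\tilde c^k_\epsilon)\le c_E\|\textit{\textbf v}^k_\epsilon\|\|\nabla\tilde c^k_\epsilon\|$ can be split by Young's inequality so that one piece lands in $\eta\|\nabla\tilde c^k_\epsilon\|^2$ and the other, after Poincar\'e on $\textit{\textbf v}^k_\epsilon$, in $\nu\|\nabla\textit{\textbf v}^k_\epsilon\|^2$, with a strictly positive fraction of each dissipation left to spare for the analogous balances needed in (\ref{eq62}) and (\ref{eq64}). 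Only if every such residual $C\|Z^k\|^2$ can be re-routed through Poincar\'e into the dissipation does the integrating factor step produce the decay-type bound (\ref{eq69}) in place of the exponential growth estimate of the classical Gronwall argument.
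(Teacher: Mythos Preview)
Your proposal contains the right ingredients and is correct, but you make the argument considerably harder than it needs to be, and your final paragraph raises a false obstacle. The paper's proof is a two-line affair once Proposition~\ref{Proposition 5.3} is in hand: it takes inequality (\ref{eq67}) \emph{as is} (with the term $C\|Z^k(t)\|^2$ still sitting on the right), invokes the uniform bound (\ref{eq68}) to replace that term by a constant $C(T)$, applies Poincar\'e once on the left-hand dissipation to produce $\beta\|Z^k(t)\|^2$, and then multiplies by $e^{\beta t}$ and integrates. There is no need to redo the estimates (\ref{eq61})--(\ref{eq66}) with sharper Young parameters, and no need to ``re-route'' any $\|Z^k\|^2$ contribution through Poincar\'e into the dissipation.

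Consequently, your closing sentence ``Only if every such residual $C\|Z^k\|^2$ can be re-routed through Poincar\'e into the dissipation does the integrating factor step produce the decay-type bound'' is simply false: the a~priori bound from Proposition~\ref{Proposition 5.3} --- which you yourself mention as a fallback --- is not a fallback at all but the whole mechanism, and it works without any smallness condition relating $c_E$, $C_\Omega$, $\eta$ and $\nu$. The absorption route you emphasize would indeed require such a condition (for instance $c_E^2 C_\Omega/\eta$ small relative to $\nu$), which is nowhere assumed; the paper avoids this issue entirely.
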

\begin{proof} Using the Poincare's inequality ($\|Z^k(t)\|^2 \leq C_\Omega \|\nabla  Z^k(t)\|^2$) and the
definition of $\beta$, from (\ref{eq67}) and (\ref{eq68}), we have
\[\frac{d}{dt}\|Z^k(t)\|^2+\beta \|Z^k(t)\|^2 \leq C\,(T) + C\,\|\theta_\delta(t)\|^2_{H^1} 
+ C\, \| \frac{\partial}{\partial t}\theta_\delta(t)\|^2.\]
Consequently, for all $ t \in [0,T]$
\begin{eqnarray}\label{eq70}
\frac{d}{dt}( \, \mbox{\large e}\,^{\beta t} \,\|Z^k(t)\|^2 )&\leq & \mbox{\large e}\,^{\beta T} \, (\, C(T) 
+ C\,\|\theta_\delta(t) \|^2_{H^1} + C\, \| \frac{\partial}{\partial t}\theta_\delta(t)\|^2\,).
\end{eqnarray}
Now, taking into account ({\bf S$_3$}) ($\theta_\delta \in H^1(H^1)$) and integrating (\ref{eq70}) from $0$ to
$T$, we obtain inequality (\ref{eq69}). 
\end{proof}

In the following Lemma we will prove the existence of reproductive
solution of the approximated problem (\ref{eq51})-(\ref{eq55}).
\begin{lemma}\label{Lemma 5.5} 
Under the hypotheses of Theorem \ref{Theorem 5.1}, the problem (\ref{eq51})-(\ref{eq55}) admits at least 
one reproductive solution $ (\tilde c_\epsilon^k,\tilde\theta_\epsilon^k,$ $\textit{\textbf v}_\epsilon^k),
\ \forall \, k \geq 1$. Furthermore, there exists $(\tilde c_\epsilon, \tilde \theta_\epsilon, 
\textit{\textbf v}_\epsilon)$ such that as $k \rightarrow \infty$,
\begin{eqnarray}
\tilde c^k_\epsilon &\rightarrow &\tilde c_\epsilon \quad \mbox{weakly in } L^2(H^1),\quad 
\mbox{weakly -* in } L^\infty(L^2),\label{eq71}\\
\tilde\theta^k_\epsilon & \rightarrow &\tilde\theta_\epsilon\quad\mbox{ weakly in } L^2(H_\theta),\quad 
\mbox{weakly -* in } L^\infty(L^2),\label{eq72}\\
\textit{\textbf v}^k_\epsilon & \rightarrow &\textit{\textbf v}_\epsilon \quad \mbox{ weakly in } 
L^2(\textit{\textbf H}_{v}), \quad\mbox{weakly -* in } L^\infty(\textit{\textbf H}).\label{eq73}
\end{eqnarray}
\end{lemma}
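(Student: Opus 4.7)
The plan is to obtain the reproductive Galerkin solution via a Brouwer fixed-point argument applied to the Poincar\'e map of the finite-dimensional ODE system, and then to extract weak limits from the resulting family using the uniform estimates already established.

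First, for fixed $k$, I view (\ref{eq51})--(\ref{eq53}) together with (\ref{eq55}) as a nonlinear ODE system for the coefficient vector
\[\mathbf{G}(t)=(c_{k1}^\epsilon(t),\dots,c_{kk}^\epsilon(t),d_{k1}^\epsilon(t),\dots,d_{kk}^\epsilon(t),e_{k1}^\epsilon(t),\dots,e_{kk}^\epsilon(t))\in\mathbb{R}^{3k}.\]
The right-hand side is locally Lipschitz in $\mathbf{G}$: the trilinear convective terms are polynomial, and the buoyancy and regularized Carman--Kozeny terms depend smoothly on $(c,\theta)$ through $c_l\in W^{1,\infty}(\mathbb{R}^2)$ (hypothesis $(\mathbf{S}_3)$) and through $F_i^\epsilon$ in (\ref{eq29}), whose denominator is bounded away from zero for $\epsilon>0$. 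Standard ODE theory therefore gives a unique local solution, and by Remark \ref{remark5.2} combined with the a priori bound of Proposition \ref{Proposition 5.3}, the solution extends to all of $[0,T]$. This defines the Poincar\'e map
\[\Phi_k:\mathbb{R}^{3k}\longrightarrow\mathbb{R}^{3k},\qquad \mathbf{G}(0)\longmapsto \mathbf{G}(T),\]
which is continuous by continuous dependence on initial data. A fixed point of $\Phi_k$ is exactly a reproductive solution of (\ref{eq51})--(\ref{eq55}).

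The key step is to apply Brouwer's theorem to $\Phi_k$ on a suitable closed ball. Thanks to the $L^2$-orthonormality of the bases noted in Remark \ref{remark5.2}, $\|Z^k(\cdot)\|^2$ coincides with the Euclidean norm squared of $\mathbf{G}(\cdot)$. Proposition \ref{Proposition 5.4} then reads
\[|\Phi_k(\mathbf{G}(0))|^2=\|Z^k(T)\|^2\leq e^{-\beta T}|\mathbf{G}(0)|^2+e^{-\beta T}C(\beta,T),\]
so the choice $R^2=C(\beta,T)/(e^{\beta T}-1)$, which is independent of both $k$ and $\epsilon$, yields a closed ball $\mathcal{B}_R\subset\mathbb{R}^{3k}$ with $\Phi_k(\mathcal{B}_R)\subset\mathcal{B}_R$. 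Brouwer's fixed-point theorem then produces $\mathbf{G}^*\in\mathcal{B}_R$ with $\Phi_k(\mathbf{G}^*)=\mathbf{G}^*$, giving the required reproductive Galerkin triple $(\tilde c_\epsilon^k,\tilde\theta_\epsilon^k,\textit{\textbf v}_\epsilon^k)$ for every $k\geq 1$.

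Finally, since $\|Z^k(0)\|\leq R$ holds uniformly in $k$ and $\epsilon$, Proposition \ref{Proposition 5.3} furnishes uniform bounds for $\tilde c_\epsilon^k$ and $\tilde\theta_\epsilon^k$ in $L^\infty(L^2)\cap L^2(H^1)$ and for $\textit{\textbf v}_\epsilon^k$ in $L^\infty(\textit{\textbf H})\cap L^2(\textit{\textbf H}_v)$. Reflexivity together with the Banach--Alaoglu theorem yields (up to subsequence) the weak and weak-$*$ convergences announced in (\ref{eq71})--(\ref{eq73}). The delicate point I expect to be the crux is precisely the self-map property in the fixed-point step: the absorbing-ball estimate of Proposition \ref{Proposition 5.4}, with its contractive prefactor $e^{-\beta T}<1$, is essential, since the mere Gronwall-type bound (\ref{eq68}) would grow with $T$ and would not let $\Phi_k$ send any ball into itself. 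Identifying the weak limits above as genuine solutions of (\ref{eq46})--(\ref{eq49}) requires strong compactness to pass to the limit in the nonlinear terms and is deferred to the proof of Theorem \ref{Theorem 5.1}.
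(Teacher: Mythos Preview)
Your argument is correct and parallels the paper's closely: both construct the Poincar\'e map $\Phi_k:\mathbb{R}^{3k}\to\mathbb{R}^{3k}$, use the contractive estimate of Proposition~\ref{Proposition 5.4} to produce a fixed point, and then read off the weak and weak-$*$ convergences from the uniform bounds of Proposition~\ref{Proposition 5.3}. The one genuine difference lies in the fixed-point device. You show directly that $\Phi_k$ maps the closed ball of radius $R=\bigl(C(\beta,T)/(e^{\beta T}-1)\bigr)^{1/2}$ into itself and invoke Brouwer; the paper instead applies the Leray--Schauder homotopy principle, verifying that every solution of $\lambda\,\Phi(\mathbf{r})=\mathbf{r}$ with $\lambda\in[0,1]$ satisfies the very same a~priori bound $\|\mathbf{r}\|^2\le C(\beta,T)/(e^{\beta T}-1)$. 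In finite dimensions the two arguments are equivalent and yield the identical radius; your Brouwer route is slightly more elementary and makes the invariant ball explicit, while the paper's homotopy formulation avoids having to exhibit the ball beforehand. A minor wording point: $c_l\in W^{1,\infty}$ gives Lipschitz rather than smooth dependence, but Lipschitz is all you need for well-posedness and continuous dependence of the ODE, so the substance of your argument is unaffected.
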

\begin{proof} Let be the operator $\mathbf{Y}:[0,T] \mapsto \mathbb{R}$ defined as
\[\mathbf{Y}(t)=(\, c_{k1}^\epsilon(t),\,\cdots, \, c_{kk}^\epsilon(t), \, d_{k1}^\epsilon(t), \, 
\cdots, \, d_{kk}^\epsilon(t), \, e_{k1}^\epsilon(t), \, \cdots, \, e_{kk}^\epsilon(t) \, )\]
where the time dependent functions $ \{ c_{ki}^\epsilon(t),\, d_{ki}^\epsilon(t),\,  e_{ki}^\epsilon(t)\}_{i=1}^k$ 
are the coefficients of the expansion $Z^k \,(x,t)$ given in (\ref{eq50}).
Thus, taking into account Remark \ref{remark5.2}, we have
\begin{equation}\label{eq74}
\|{\bf Y}(t)\|_{\mathbb{R}^{3k}} = \|Z^k(t)\|, \quad \forall \, t\in [0,T].
\end{equation}
We define the operator
\begin{eqnarray*}
 \ \Phi \, : \ \mathbb{R}^{3k} &\longmapsto  &  \mathbb{R}^{3k} \\
\mathbf{r} \ &\longmapsto & \Phi(\mathbf{r}) = \mathbf{Y}(T) \qquad \qquad \qquad
\end{eqnarray*}
where for each $k \geq 1$ and $\mathbf{r} =(r_1, r_2, \dots, r_{3k})$, $\mathbf{Y}(T)$ is the vector of 
coefficients of the solution of the problem (\ref{eq51})-(\ref{eq55}) at time $T$, with initial condition given by
\[\tilde c^k_0(x) = \sum_{i=1}^k r_i \, \varphi^i(x), \ \quad\tilde \theta^k_0(x) = \sum_{i=1}^k r_{k+i} 
\, \psi^i(x), \ \quad\textit{\textbf v}^k_0(x) = \sum_{i=1}^k r_{2k+i} \,\textit{\textbf w}^i(x).\] 
We will prove that $\Phi$ has at least one fixed point, as consequence of Leray-Schauder's homotopy
Theorem. For this purpose, it is sufficient to show that for any $\lambda \in [0,1]$, a solution of the equation
\begin{equation}\label{eq75}
\lambda \, \Phi(\, \mathbf{r}(\lambda)\,) = {\bf r}(\lambda),
\end{equation}
is bounded independently of $\lambda$. Since ${\bf r}(0) = {\bf 0}$, the proof will be restrict to $\lambda \in (0,1]$.

Since $\lambda \leq1$, by the definition of $\Phi$ together equalities (\ref{eq74}) and (\ref{eq75}), we have
\begin{equation}\label{eq76}
\|\mathbf{r}(\lambda)\|^2_{\mathbb{R}^{3k}} \leq \|\Phi(\mathbf{r}(\lambda))\|^2_{\mathbb{R}^{3k}}\leq \|Z^k(T)\|^2.
\end{equation}
Then, from (\ref{eq76}) and  (\ref{eq69}), we obtain
\[ \mbox{\large e}\, ^{\beta T}\, \|\mathbf{r}(\lambda)\|^2_{\mathbb{R}^{3k}}\leq \mbox{\large e}\, ^{\beta T}\, 
\|Z^k(T)\|^2\leq \|\,\mathbf{r}(\lambda)\|^2_{\mathbb{R}^{3k}}+ C(\beta,T)\]
thus $ \ \displaystyle (\mbox{\large e}\, ^{\beta T}-1) \, \|\, {\bf r}(\lambda)\,\|^2_{\mathbb{R}^{3k}} 
\leq C(\beta,T)$,
and consequently
\begin{equation}\label{eq77}
\| \, {\bf r}(\lambda)\,\|^2_{\mathbb{R}^{3k}} \leq  \frac{1}{\mbox{\large e}\, ^{\beta T}-1} \, 
C(\beta,T), \ \forall \, \lambda \in [0,1].
\end{equation}
Since the upper bound in (\ref{eq77}) is independent on $\lambda\in [0,1]$, we have established that 
operator $\Phi$ has at least one fixed point, and from (\ref{eq75}) the fixed point of $\Phi$
is denoted by ${\bf r}(1)$ and satisfies (\ref{eq77}). Therefore, we have proved the result of the 
Lemma \ref{Lemma 5.5}. Moreover, (\ref{eq71})-(\ref{eq73}) are a consequence of Proposition \ref{Proposition 5.3}.
\end{proof}

Nevertheless, the convergences given in Lemma \ref{Lemma 5.5} are insufficient to prove that the limit 
functions become a reproductive weak solution of the  problem (\ref{eq30})-(\ref{eq32c}). Therefore, we need
to prove strong convergence.
\begin{lemma}\label{Lemma 5.6} 
Under the hypotheses of Lemma \ref{Lemma 5.5}, as $k \rightarrow \infty,$ the sequences $\{\tilde
c_\epsilon^k\}, \{\tilde \theta_\epsilon^k\}$ and $\{\textit{\textbf v}_\epsilon^k\}$ converge weakly in
$L^2(\Omega)$, uniformly for $t \in [0,T]$.
\end{lemma}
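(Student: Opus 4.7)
The plan is to upgrade the weak and weak-$*$ convergences from Lemma 5.5 to uniform weak convergence on $[0,T]$ via an Arzel\`a--Ascoli argument in the weak topology of $L^2$. The two ingredients I will need are: (a) uniform boundedness of $Z^k(t)$ in $L^2$, and (b) uniform equicontinuity of $t\mapsto Z^k(t)$ in some weaker dual space. Ingredient (a) is already provided by Proposition \ref{Proposition 5.3}, so the real task is (b), for which I must estimate $\partial_t Z^k$ in a dual space.

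First I will derive uniform bounds on $\partial_t\tilde c^k_\epsilon$, $\partial_t\tilde\theta^k_\epsilon$, and $\partial_t\textit{\textbf v}^k_\epsilon$ in the dual spaces of $H^1$, $H_\theta$, and $\textit{\textbf H}_v$ respectively. Fix $j\leq k$ and take $\varphi^j$, $\psi^j$, $\textit{\textbf w}^j$ as test functions in (\ref{eq51})--(\ref{eq53}); each right-hand side can be bounded using hypothesis ({\bf S$_3$}) (so that $c_l$ and $\nabla c_l$ are bounded), the fact that for fixed $\epsilon>0$ the regularized friction $F_i^\epsilon$ is bounded, the a priori estimates from Proposition \ref{Proposition 5.3}, and the 3D interpolation inequalities listed in Section 3. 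The most delicate term is $((\textit{\textbf v}^k_\epsilon\cdot\nabla)\textit{\textbf v}^k_\epsilon,\textit{\textbf w}^j)$, which is controlled by $C\,\|\textit{\textbf v}^k_\epsilon\|^{1/2}\|\textit{\textbf v}^k_\epsilon\|_{H^1}^{3/2}\,\|\textit{\textbf w}^j\|_{H^1}$. Because the Galerkin basis is orthonormal in $L^2$, the projection onto $\textit{\textbf H}^k_v$ is bounded in the $H^1$-norm (using the standard choice of basis of eigenfunctions of a coercive selfadjoint operator, as in Temam), so I obtain
$$
\partial_t\tilde c^k_\epsilon\in L^2((H^1)^*),\qquad \partial_t\tilde\theta^k_\epsilon\in L^2(H_\theta^*),\qquad \partial_t\textit{\textbf v}^k_\epsilon\in L^{4/3}(\textit{\textbf H}_v^*),
$$
uniformly in $k$ (and in $\epsilon$, with $\epsilon$ fixed here).

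Second, integrating these bounds in time, I obtain H\"older estimates of the form
$$
\|Z^k(t_2)-Z^k(t_1)\|_{X^*}\leq C\,|t_2-t_1|^{1/4},\qquad 0\leq t_1<t_2\leq T,
$$
with $X:=H^1\times H_\theta\times\textit{\textbf H}_v$ and $C$ independent of $k$. Then, for each $\varphi$ in a countable dense subset of $L^2$, the scalar functions $t\mapsto(Z^k(t),\varphi)$ are uniformly bounded (by Proposition \ref{Proposition 5.3}) and equicontinuous on $[0,T]$ (approximating $\varphi$ by elements of $X$ and using the equicontinuity in $X^*$ together with the $L^2$-bound). Arzel\`a--Ascoli then yields a subsequence converging uniformly on $[0,T]$; a standard diagonal extraction produces a single subsequence valid for every $\varphi$ in the dense set, and density plus the uniform $L^2$-bound extend this to every $\varphi\in L^2$. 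Uniqueness of the limit is inherited from Lemma \ref{Lemma 5.5}, so no further subsequence is needed.

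The main obstacle will be the nonlinear convective term in the Navier--Stokes block, whose $L^{4/3}$-in-time estimate is just barely enough in three dimensions; care is required to keep the constants independent of $k$ when projecting on the finite-dimensional spaces, which is why I would insist on using an $H^1$-stable Galerkin basis. The advection terms $\textit{\textbf v}^k_\epsilon\cdot\nabla c_l(\textit{\textbf z}_\epsilon^k)$ and $\textit{\textbf v}^k_\epsilon\cdot\nabla\tilde\theta^k_\epsilon$ in (\ref{eq51})--(\ref{eq52}) present no real difficulty once ({\bf S$_3$}) and the maximum principle estimates (\ref{eq44})--(\ref{eq45}) are in hand, since they provide $L^\infty$ bounds on $c_l$ and on $\theta_\epsilon$ that allow a simple H\"older-type estimate in the dual of $H^1$.
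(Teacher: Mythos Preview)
Your approach is correct and is in spirit the same as the paper's: both rest on Arzel\`a--Ascoli applied to the scalar functions $t\mapsto(Z^k(t),\varphi)$, using Proposition~\ref{Proposition 5.3} for boundedness and the Galerkin equations for equicontinuity, with the convective Navier--Stokes term yielding the $|t_2-t_1|^{1/4}$ modulus via the $L^{4/3}$-in-time bound.

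The difference is only in packaging. The paper works directly with the \emph{fixed} basis elements $\varphi^j,\psi^j,\textit{\textbf w}^j$ for each $j$: it integrates (\ref{eq51})--(\ref{eq53}) from $t$ to $t+\Delta t$ and bounds the right-hand sides term by term, obtaining equicontinuity of $\xi_{kj}(t)=(\tilde c^k_\epsilon(t),\varphi^j)$, $\chi_{kj}(t)$, $\zeta_{kj}(t)$ with constants depending on $j$ (through $\|\nabla\varphi^j\|$, $\|\psi^j\|_{L^6}$, etc.) but not on $k$. This suffices since $\{\varphi^j\}$, $\{\psi^j\}$, $\{\textit{\textbf w}^j\}$ are total in $L^2$. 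You instead bound $\partial_t Z^k$ in the dual of $H^1\times H_\theta\times\textit{\textbf H}_v$ uniformly in $k$, which forces you to test against \emph{arbitrary} $\textit{\textbf w}\in\textit{\textbf H}_v$ and hence to assume that the Galerkin projection is $H^1$-stable (eigenfunction basis). The paper's component-wise route avoids that extra structural hypothesis on the basis; your route is more portable to an Aubin--Lions compactness statement, but at the cost of that additional assumption.
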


\begin{proof} For fixed $j$ and $k \geq j$, we will prove that $\,\xi_{kj}(t)=(\tilde c_\epsilon^k(t),\varphi^j), 
\ \chi_{kj}(t)=(\tilde\theta_\epsilon^k(t),\psi^j)$ and $\zeta_{kj}(t)=(\textit{\textbf v}_\epsilon^k(t), 
\textit{\textbf w}^j)$ are uniformly bounded and equicontinuous families of functions on $[0,T]$, and then 
applying the Theorem of Ascoli-Arzel\`a, we obtain the uniform convergence on $[0, T]$.

The uniform boundedness of the family $\{\xi_{kj}(t), \, \chi_{kj}(t), \, \zeta_{kj}(t)\}$ follows from 
Proposition \ref{Proposition 5.3}, therefore, we are going to prove the equicontinuity.

The equalities (\ref{eq51})-(\ref{eq53}), can be rewritten in the form
\begin{eqnarray}
\xi_{kj}^{\prime}(t)&=&-\eta \,( \nabla  \tilde c^k_\epsilon,
\nabla  \varphi^j)
+(  \textit{\textbf v}^k_\epsilon
 \cdot\nabla c_l(\textit{\textbf z}_\epsilon^k), \varphi^j)
,\label{eq78}\\
\chi_{kj}^{\prime}(t)&=& -\kappa\,(\nabla  \tilde\theta^k_\epsilon,
\nabla \psi^j )- \rho\, C_p\, (\textit{\textbf v}^k_\epsilon \cdot
\nabla \tilde\theta^k_\epsilon , \psi^j )-\rho\,C_p\,(\textit{\textbf
v}^k_\epsilon \cdot \nabla \theta_\delta, \psi^j )\nonumber \\
&& - \kappa \, ( \nabla \theta_\delta, \nabla  \psi^j)-(\frac{
\partial}{\partial t} \theta_\delta, \psi^j),\label{eq79}\\
\zeta_{kj}^{\prime} (t)&=& -\nu\, (\nabla \textit{\textbf v}^k_\epsilon,
\nabla \textit{\textbf w}^j)- \rho \, (\textit{\textbf v}^k_\epsilon
\cdot \nabla \textit{\textbf v}^k_\epsilon, \textit{\textbf w}^j)
- (F_i^\epsilon (\textit{\textbf z}_\epsilon^k)\,\textit{\textbf v}^k_\epsilon,
\textit{\textbf w}^j)\nonumber \\
&&+({\bf F}_e (\textit{\textbf z}_\epsilon^k), \textit{\textbf w}^j),\label{eq80}
\end{eqnarray}
for $j=1,\dots,k$.

By integrating (\ref{eq78})-(\ref{eq80}) with respect to $t$ from $t$ to $t+\Delta t$ and estimating 
the right-hand side  by using Schwarz's inequality, we obtain
\begin{eqnarray}\label{ant}
&&|\xi_{kj}(t+\Delta t)-\xi_{kj}(t)|\leq\eta \,\int_t^{t+\Delta t}  \vert\nabla  \tilde c^k_\epsilon\cdot
\nabla  \varphi^j\vert\,dt+\int_t^{t+\Delta t}   \vert\textit{\textbf v}^k_\epsilon
 c_l(\textit{\textbf z}_\epsilon^k)\cdot\nabla \varphi^j\vert\,dt\nonumber \\
&&\hspace*{.5cm}\leq \eta \int_t^{t+\Delta t} \|\nabla \tilde c_\epsilon^k(t)\|\,\|\nabla \varphi^j\| dt 
+ \int_t^{t+ \Delta t}\|\textit{\textbf v}^k_\epsilon(t)\|\,\|\nabla \varphi^j\|\,
\|c_l (\textit{\textbf z}_\epsilon^k)\|_{L^\infty} dt \nonumber \\
&&\hspace*{.5cm}\leq C\,\int_t^{t+\Delta t} \|\nabla\,\tilde c_\epsilon^k(t)\|\,dt +C\, \int_t^{t+\Delta t}
\|\textit{\textbf v}^k_\epsilon(t)\|\, dt \nonumber \\
&&\hspace*{.5cm}\leq C\sqrt{\Delta t}\, \{(\int_t^{t+\Delta t} \|\nabla \tilde c_\epsilon^k (t)\|^2 dt)^{1/2}
+(\int_t^{t+\Delta t}\|\textit{\textbf v}^k_\epsilon(t)\|^2 dt)^{1/2}\}.\label{eq81}
\end{eqnarray}
The terms $(\textit{\textbf v}^k_\epsilon \cdot\nabla \tilde\theta^k_\epsilon , \psi^j )$ and $(\textit{\textbf
v}^k_\epsilon \cdot \nabla \theta_\delta, \psi^j )$ of (\ref{eq79}) can be bounded, after integrating by 
parts, as $(  \textit{\textbf v}^k_\epsilon\cdot\nabla c_l(\textit{\textbf z}_\epsilon^k), \varphi^j)$ 
in (\ref{ant}) or without integrating as follows:
\begin{eqnarray}
&&|\chi_{kj}(t+\Delta t)-\chi_{kj}(t)|\nonumber \\
&&\hspace*{.5cm}\leq C\, \int_t^{t+\Delta t} \|\nabla\,\tilde \theta_\epsilon^k(t)\|\,\|\nabla \psi^j\|\, dt 
+ C\, \int_t^{t+\Delta t}\|\textit{\textbf v}^k_\epsilon (t)\|_{L^3}\,\|\nabla \tilde \theta_\epsilon^k\|
\,\| \psi^j\|_{L^6}\, dt\nonumber \\
&&\hspace*{1cm}+C\,\int_t^{t+\Delta t} \|\textit{\textbf v}^k_\epsilon
(t) \|_{L^3}\,\|\nabla\theta_\delta(t)\| \,\| \psi^j\|_{L^6}\,dt\nonumber \\
&&\hspace*{1cm}+ C\,\int_t^{t+\Delta t}(\,\|\delta\theta_\delta(t)\|\,\|\nabla \psi^j\|
+\|\frac{\partial}{\partial t}\theta_\delta(t)\|\,\| \psi^j\|\,) \,dt \nonumber \\
&&\hspace*{.5cm}\leq C\,\sqrt{\Delta t} (\int_t^{t+\Delta t}
\|\nabla \tilde \theta_\epsilon^k(t)\|^2 dt)^{1/2}\nonumber\\
&&\hspace*{1cm}+C \,\sqrt[4]{\Delta t} (\int_t^{t+\Delta t}
 \|\nabla \textit{\textbf v}^k_\epsilon(t)\|^2 dt)^{1/4}(\int_t^{t+\Delta t}
\|\nabla \tilde \theta_\epsilon^k(t)\|^2 dt)^{1/2}\nonumber\\
&&\hspace*{1cm}+C \,\sqrt[4]{\Delta t} (\int_t^{t+\Delta t}\|\nabla \textit{\textbf v}^k_\epsilon(t)\|^2 
dt)^{1/4}(\int_t^{t+\Delta t}\| \tilde \theta_\delta(t)\|^2_{H^1} dt)^{1/2}\nonumber\\
&&\hspace*{1cm}+C\sqrt{\Delta t}\, \Big( (\int_t^{t+\Delta t} \|\theta_\delta(t)\|^2_{H^1}dt)^{1/2}
+(\int_t^{t+\Delta t}\|\frac{\partial}{\partial t} \theta_\delta(t)\|^2 dt)^{1/2}\Big).\label{eq82}
\end{eqnarray}
Here, we have used that $\|\textit{\textbf v}^k_\epsilon (t)\|_{L^3}\leq\|\textit{\textbf v}^k_\epsilon (t)\|^{1/2}
\,\|\nabla\textit{\textbf v}^k_\epsilon (t)\|^{1/2}$ and $\|\textit{\textbf v}^k_\epsilon (t)\|\leq C$.
\begin{eqnarray}\label{eq83}
&&|\zeta_{kj}(t+\Delta t)-\zeta_{kj}(t)|\nonumber \\
&&\hspace*{.5cm}\leq C\, \int_t^{t+\Delta t} \|\nabla \textit{\textbf v}^k_\epsilon(t)
\|\, dt +C\, \int_t^{t+\Delta t}\, \|\nabla \textit{\textbf v}^k_\epsilon(t)\|^{3/2}\,dt\nonumber\\
&&\hspace*{1cm} +C\,\int_t^{t+\Delta t}\|F_i^\epsilon(\textit{\textbf z}_\epsilon^k)
\|_{L^\infty}\,\|\textit{\textbf v}^k_\epsilon(t)\|\, dt
+\,\int_t^{t+\Delta t} \|\textit{\textbf F}_e(\textit{\textbf z}_\epsilon^k)\|\,dt\nonumber \\
&&\hspace*{.5cm}\leq C\,\sqrt{\Delta t}\,(\int_t^{t+\Delta t} \|\nabla
\, \textit{\textbf v}^k_\epsilon(t)\|^2 dt)^{1/2}+C\,\sqrt[4]{\Delta t}\,(\int_t^{t+\Delta t} \|\nabla
\, \textit{\textbf v}^k_\epsilon(t)\|^2)^{3/4} dt\nonumber\\
&&\hspace*{1cm} +C\, (\max_t\|\textit{\textbf v}^k_\epsilon(t)\|)\,\sqrt{\Delta t}\,(\int_t^{t+\Delta t}
\|F_i^\epsilon(\textit{\textbf z}_\epsilon^k)\|_{L^\infty}^2 dt)^{1/2}+ C \,\Delta t.
\end{eqnarray}
Then by Lemma \ref{Lemma 5.5}, we have that the right-hand side of inequalities (\ref{eq81}), (\ref{eq82}) 
and (\ref{eq83}) converges to zero uniformly in $k$ as $\Delta t \rightarrow 0$ and it is follow that 
$\,\xi_{kj}(t), \, \chi_{kj}(t)$ and $\zeta_{kj}(t)$ are equicontinuous families on $[0,T]$. Thus,
from Lemma \ref{Lemma 5.5} and the Theorem of Arzel\`a-Ascoli the proof of Lemma \ref{Lemma 5.6} is complete.
\end{proof}

In the following lemma, we will prove strong convergence of the
reproductive solutions of the problem (\ref{eq51})--(\ref{eq55}).
\begin{lemma}\label{Lemma 5.7} 
Under the hypotheses of Lemma \ref{Lemma 5.6}, the sequence $\{(\tilde c^k_\epsilon,
\tilde\theta^k_\epsilon, \textit{\textbf v}^k_\epsilon)\}$ converges strongly in $L^2(L^2)\times 
L^2(L^2) \times L^2(\textit{\textbf H})$ toward $(\tilde c_\epsilon, \,\tilde\theta_\epsilon, \,
\textit{\textbf v}_\epsilon)$.
\end{lemma}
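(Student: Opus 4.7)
The plan is to obtain strong convergence in $L^2(L^2)$ (and $L^2(\textit{\textbf H})$ for the velocity) by combining the uniform energy estimates of Proposition~\ref{Proposition 5.3} with the uniform-in-$t$ convergence of the inner products established in Lemma~\ref{Lemma 5.6}, using Friedrichs' Lemma (Lemma~\ref{Lemma 3.3}) as the bridge between weak and strong convergence. The key idea is that weak convergence plus equiintegrability of gradients upgrades to strong $L^2$ convergence once one can control the $L^2$ norm by a finite number of scalar products.

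First, I would apply Friedrichs-type inequalities in each of the three spaces. For any prescribed $\omega>0$, there exist finitely many basis elements so that
\begin{equation*}
\|u\|^2 \,\leq\, \sum_{i=1}^{N_\omega}(u,w^i)^2 + \omega\,\|\nabla u\|^2,
\end{equation*}
for $u$ in the relevant space. Applied to the differences $u=\tilde c^k_\epsilon-\tilde c^m_\epsilon$, $u=\tilde\theta^k_\epsilon-\tilde\theta^m_\epsilon$ and $u=\textit{\textbf v}^k_\epsilon-\textit{\textbf v}^m_\epsilon$, and integrated in time over $[0,T]$, this yields, for instance,
\begin{equation*}
\int_0^T\!\|\tilde c^k_\epsilon-\tilde c^m_\epsilon\|^2\,dt \,\leq\, \sum_{i=1}^{N_\omega}\int_0^T\!(\tilde c^k_\epsilon-\tilde c^m_\epsilon,\varphi^i)^2\,dt \,+\, \omega\int_0^T\!\|\nabla(\tilde c^k_\epsilon-\tilde c^m_\epsilon)\|^2\,dt.
\end{equation*}
By Proposition~\ref{Proposition 5.3}, the last term is bounded by $C\omega$ uniformly in $k,m$. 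By Lemma~\ref{Lemma 5.6}, each of the finitely many inner products $(\tilde c^k_\epsilon(t),\varphi^i)$ converges uniformly on $[0,T]$, so the first sum tends to $0$ as $k,m\to\infty$ (with $\omega$ and hence $N_\omega$ fixed). Taking $\limsup_{k,m\to\infty}$ and then letting $\omega\to 0$ shows that $\{\tilde c^k_\epsilon\}$ is Cauchy in $L^2(L^2)$, hence strongly convergent to $\tilde c_\epsilon$. The identical argument, applied with the bases of $H_\theta$ and $\textit{\textbf H}_v$, yields strong convergence of $\tilde\theta^k_\epsilon$ in $L^2(L^2)$ and of $\textit{\textbf v}^k_\epsilon$ in $L^2(\textit{\textbf H})$.

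The main technical point to verify is that Friedrichs' Lemma, stated in Lemma~\ref{Lemma 3.3} for $\mathbf{H}^1_0$, carries over to the three spaces actually in use: the mean-zero subspace of $H^1$ (relevant for $\tilde c_\epsilon$ thanks to (\ref{eq54})), the space $H_\theta$ of functions vanishing on $\Gamma_{bt}$, and the space $\textit{\textbf H}_v$. In each case the space embeds compactly into $L^2$, so the standard proof of Friedrichs' Lemma (a contradiction argument using Rellich--Kondrachov on a spanning orthonormal family) applies verbatim, delivering the required inequality with the bases fixed at the outset of Section~5. This is the only step where some care is needed; the remainder of the argument is a straightforward combination of the already proven uniform bounds and the Ascoli--Arzel\`a output of Lemma~\ref{Lemma 5.6}.
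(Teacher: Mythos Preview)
Your proposal is correct and follows essentially the same approach as the paper: apply Friedrichs' inequality to the differences of two approximants, integrate in time, bound the gradient term uniformly via Proposition~\ref{Proposition 5.3}, and make the finite sum of inner products small using the uniform-in-$t$ convergence from Lemma~\ref{Lemma 5.6}, so that the sequence is Cauchy in $L^2(L^2)$. Your additional remark that Lemma~\ref{Lemma 3.3} must be adapted from $\mathbf{H}^1_0$ to the spaces actually in use (the mean-zero subspace of $H^1$, $H_\theta$, and $\textit{\textbf H}_v$) is a point the paper glosses over but which you handle correctly via compact embedding.
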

\begin{proof} By setting
$$\textit{\textbf u}(t)=(c_\epsilon^{m,n}(t), \theta_\epsilon^{m,n}(t),\textit{\textbf v}_\epsilon^{m,n}(t))$$ 
in the inequality of Friedrich, Lemma \ref{Lemma 3.3}, being
\begin{eqnarray*}
c_\epsilon^{m,n}(t)&=&\tilde c^{k_m}_\epsilon(t)-\tilde c^{k_n}_\epsilon(t), \quad \theta_\epsilon^{m,n}(t)
\ = \ \tilde\theta^{k_m}_\epsilon (t)-\tilde\theta^{k_n}_\epsilon(t),\\ 
\textit{\textbf v}_\epsilon^{m,n}(t)
&=&\textit{\textbf v}^{k_m}_\epsilon(t)-\textit{\textbf v}^{k_n}_\epsilon(t),
\end{eqnarray*}
and integrating it with respect to $t$ from $0$ to $T$,  we obtain
\begin{eqnarray}\label{eq85}
&&\int_0^T (\|c_\epsilon^{m,n}(t)\|^2 +\|\theta_\epsilon^{m,n}(t)\|^2+
 \|\textit{\textbf v}_\epsilon^{m,n}(t)\|^2 )dt \nonumber\\
&&\hspace*{.5cm}\leq \sum_{i=1}^{N_\omega}\int_0^T [(c_\epsilon^{m,n}(t), \varphi^i)^2
+(\theta_\epsilon^{m,n}(t), \psi^i)^2+(\textit{\textbf v}_\epsilon^{m,n}(t),
 \textit{\textbf w}^i)^2] dt \nonumber \\
&&\hspace*{1cm}+\omega \, \int_0^T (\|\nabla  c_\epsilon^{m,n}(t)\|^2
+\|\nabla \theta_\epsilon^{m,n}(t)\|^2+\|\nabla
(\textit{\textbf v}_\epsilon^{m,n}(t)\|^2 ) dt,
\end{eqnarray}
for any given $\omega >0$.

From Proposition \ref{Proposition 5.3}, the last integral on the right-hand side of (\ref{eq85}) does 
not exceed a fixed constant for any $k_m$ and $k_n$, and from Lemma \ref{Lemma 5.6}, the first 
integral can be made arbitrarily small for sufficiently large $k_m$ and $k_n$ because of the uniform convergence in $t$.
Therefore, for sufficiently large $k_m$ and $k_n$
\[ \int_0^T (\|\tilde c^{k_m}_\epsilon(t)-\tilde c^{k_n}_\epsilon(t)\|^2+\|\tilde\theta^{k_m}_\epsilon(t)
-\tilde\theta^{k_n}_\epsilon(t)\|^2+\|\textit{\textbf v}^{k_m}_\epsilon(t)
-\textit{\textbf v}^{k_n}_\epsilon(t)\|^2)dt \rightarrow 0,\]
and this implies the Lemma \ref{Lemma 5.7}. Moreover, from Lemma \ref{Lemma 5.5}, we conclude

\noindent $\tilde c_\epsilon^k \rightarrow \tilde c_\epsilon,\quad \tilde\theta_\epsilon^k \rightarrow 
\tilde\theta_\epsilon\mbox{ strongly in } L^2(L^2), \mbox{ and } \textit{\textbf v}_\epsilon^k
\rightarrow \textit{\textbf v}_\epsilon \mbox{ strongly in } L^2(\textit{\textbf H})$. 
\end{proof}
The results of the Lemma \ref{Lemma 5.5} and Lemma \ref{Lemma 5.7} enable us to prove the Theorem \ref{Theorem 5.1}.
\vspace{3mm}

\noindent{\bf Proof of the Theorem \ref{Theorem 5.1}}
\vspace{3mm}

We going to prove that $( \,\tilde c_\epsilon\,(x,t),\, \tilde\theta_\epsilon\,(x,t), \, 
\textit{\textbf{v}}_\epsilon\,(x,t) \,)$, obtained in Lemma \ref{Lemma 5.7}, is a reproductive 
weak solution of problem (\ref{eq30})-(\ref{eq32c}) taking limit as $k \rightarrow \infty$ in
(\ref{eq51})-(\ref{eq54}) after integrating in $[0,T]$.

In fact, considering $\phi \in C^\infty([0,T])$ such that $\phi(T)=0$, and using by parts integration, we have
\begin{eqnarray*}
\int_0^T (\frac{\partial}{\partial t}(\tilde c^k_\epsilon-\tilde c_\epsilon)(t), \varphi^j \phi(t))dt 
&= &-\int_0^T (\tilde c^k_\epsilon(t) -\tilde c_\epsilon(t), \varphi^j \, \phi^{'}(t))dt\\
&&- (\tilde c^k_\epsilon(x,0)-\tilde c_\epsilon(x,0), \varphi^j\phi(0)),
\end{eqnarray*}
then, from (\ref{eq30b}), (\ref{eq55}) and Lemma \ref{Lemma 5.7}, we obtain that as $\ k \rightarrow \infty$
\begin{equation}\label{eq86}
\int_0^T (\frac{\partial}{\partial t}(\tilde c^k_\epsilon-\tilde
c_\epsilon)(t), \varphi^j \phi(t))dt \longrightarrow 0.
\end{equation}
Similarly, if $ \ k \rightarrow \infty $ we have
\begin{eqnarray}
&&\int_0^T (\frac{\partial}{\partial t}(\tilde\theta^k_\epsilon-\tilde\theta_\epsilon)(t), \,\psi^j
\phi(t))dt \longrightarrow 0, \label{87} \\
&&\int_0^T (\frac{\partial}{\partial t}(\textit{\textbf v}^k_\epsilon-\textit{\textbf v}_\epsilon)(t),
 \,\textit{\textbf w}^j \phi(t))dt \longrightarrow 0.\label{eq88}
\end{eqnarray}
Also, by Lemma \ref{Lemma 5.5} when $ \ k \rightarrow \infty $, we conclude
\begin{eqnarray}
&&\int_0^T (\nabla \tilde c^k_\epsilon(t)- \nabla  \tilde c_\epsilon(t), \nabla \varphi^j \phi(t))dt 
\longrightarrow 0, \label{eq89}\\
&&\int_0^T (\nabla  \tilde\theta^k_\epsilon (t)- \nabla \tilde\theta_\epsilon(t),\nabla \psi^j \phi(t))dt
\longrightarrow 0,\label{eq90}\\
&&\int_0^T (\nabla  \textit{\textbf v}^k_\epsilon(t)- \nabla\,\textit{\textbf v}_\epsilon(t), \nabla 
\textit{\textbf w}^j \phi(t))dt \longrightarrow 0.\label{eq91}
\end{eqnarray}
To bound the nonlinear terms, we consider $b_\epsilon$ that will represent $c_l (\textit{\textbf z}_\epsilon^k)$, 
$\tilde\theta^k_\epsilon$ or $\textit{\textbf v}_\epsilon^k$ and $\xi^j$ that will be $\phi^j$, $\psi^j$ 
or $\textit{\textbf w}^j$ in the following expression:
\begin{eqnarray*}
&&|(\textit{\textbf v}^k_\epsilon \cdot \nabla b^k_\epsilon
-\textit{\textbf v}_\epsilon \cdot \nabla b_\epsilon ,\xi^j\,\phi(t))|\\
&&\hspace*{.5cm}\leq |((\textit{\textbf v}_\epsilon -\textit{\textbf v}^k_\epsilon )\cdot 
\nabla  \xi^j, b^k_\epsilon)-(\textit{\textbf v}_\epsilon \cdot \nabla \xi^j,(b^k_\epsilon
-b_\epsilon ))|\,\sup_{t\in [0,T]}|\phi(t)|\\
&&\hspace*{.5cm} \leq  C \,(\|\textit{\textbf v}^k_\epsilon-\textit{\textbf v}_\epsilon \|\, 
\|b^k_\epsilon \|_{L^6} +\|\textit{\textbf v}_\epsilon \|_{L^6}\, \|b^k_\epsilon-b_\epsilon \|)\,
\|\nabla \xi^j\|_{L^3}\\
&&\hspace*{.5cm}\leq  C \, \|\textit{\textbf v}^k_\epsilon-\textit{\textbf v}_\epsilon \|\, 
\|\nabla b^k_\epsilon \| + C\,\|\nabla \textit{\textbf v}_\epsilon \|\, \|b^k_\epsilon-b_\epsilon \|.
\end{eqnarray*}
Then, by integrating with respect $t$ from $0$ to $T$ and using the Schwarz's inequality, we obtain
\begin{eqnarray}\label{eq92}
&&\int_0^T(\textit{\textbf v}^k_\epsilon \cdot \nabla\,b^k_\epsilon-\textit{\textbf v}_\epsilon 
\cdot \nabla b_\epsilon,\,\xi^j)\phi(t)\,dt \nonumber \\
&& \qquad \leq C\, (\int_0^T\|\textit{\textbf v}^k_\epsilon\,(t)-\textit{\textbf v}_\epsilon \,(t)\|^2 
dt)^{1/2}\,(\int_0^T\|\nabla b^k_\epsilon\,(t) \|^2 dt )^{1/2}\nonumber\\
&&\qquad \quad + C\, (\int_0^T\|\nabla \textit{\textbf v}_\epsilon\,(t)\|^2 dt )^{1/2}\,
(\int_0^T \|b^k_\epsilon\,(t)-b_\epsilon\,(t)\|^2 dt)^{1/2}.
\end{eqnarray}
Thus, from Proposition \ref{Proposition 5.3}, the continuity of $c_l(\cdot, \cdot)$, Lemma \ref{Lemma 5.7} 
and inequality (\ref{eq92}), as $\ k \rightarrow \infty$ we have
\begin{eqnarray}
&&\int_0^T(\textit{\textbf v}^k_\epsilon\,(t)  \cdot \nabla c_{l \epsilon}^k\,(t) 
- \textit{\textbf v}_\epsilon\,(t)\cdot\nabla  c_{l \epsilon}\,(t), \varphi^j)\phi(t)dt
\longrightarrow 0,\label{eq93} \\
&&\int_0^T(\textit{\textbf v}^k_\epsilon\,(t)  \cdot \nabla\tilde\theta^k_\epsilon\,(t) 
- {\bf v}_\epsilon\,(t) \cdot \nabla\, \tilde\theta_\epsilon\,(t) , \psi^j)\phi(t)dt
\longrightarrow 0, \label{eq94}\\
&&\int_0^T(\textit{\textbf v}^k_\epsilon\,(t)  \cdot \nabla\textit{\textbf v}^k_\epsilon\,(t) 
-\textit{\textbf v}_\epsilon\,(t)\cdot \nabla \textit{\textbf v}_\epsilon\,(t),
 \textit{\textbf w}^j)\phi(t)dt \longrightarrow 0.\label{eq95}
\end{eqnarray}
For the external force, taking into account that
\[{\bf F}_e(\textit{\textbf z}_\epsilon^k)- {\bf F}_e(\textit{\textbf z}_\epsilon)
= \rho\,{\bf g} [\alpha\,(\tilde \theta_ \epsilon^k-\tilde\theta_ \epsilon) 
+\beta\, (\tilde c_\epsilon^k- \tilde c_\epsilon)],\]
we have
\begin{eqnarray*}
&&|({\bf F}_e(\textit{\textbf z}_\epsilon^k)- {\bf F}_e(\textit{\textbf z}_\epsilon),
\textit{\textbf w}^j\, \phi(t))|\\
&&\hspace*{.5cm}\leq C\,\|{\bf g}\|_\infty(\,\| \tilde \theta_\epsilon^k-\tilde\theta_ \epsilon\|
+\|\tilde c_\epsilon^k - \tilde c_\epsilon\|)\|\textit{\textbf w}^j\|\,\sup_{t\in [0,T]}|\phi(t)| \\
&&\hspace*{.5cm}\leq  C\,(\|\tilde\theta_\epsilon^k-\tilde\theta_\epsilon\|+ \|\tilde c_\epsilon^k
-\tilde c_\epsilon\|),
\end{eqnarray*}
and, by integrating and using the Schwarz's inequality, we get
\begin{eqnarray}\label{eq96}
&&\int_0^T({\bf F}_e(\textit{\textbf z}_\epsilon^k)- {\bf F}_e(\textit{\textbf z}_\epsilon),
\textit{\textbf w}^j)\,\phi(t) \,dt \nonumber \\
&&\hspace*{1cm}\leq  C\,(\int_0^T \|\tilde\theta_ \epsilon^k -\tilde\theta_\epsilon \|^2 dt)^{1/2} 
+C\, (\int_0^T\|\tilde c_\epsilon^k-\tilde c_\epsilon\|^2 dt)^{1/2}.
\end{eqnarray}
Moreover,
\begin{eqnarray*}
&&F_i^\epsilon(\textit{\textbf z}_\epsilon^k)\, \textit{\textbf v}_\epsilon^k -
F_i^\epsilon(\textit{\textbf z}_\epsilon)\,\textit{\textbf v}_\epsilon
=(F_i^\epsilon(\textit{\textbf z}_\epsilon^k)-F_i^\epsilon(\textit{\textbf z}_\epsilon))
\,\textit{\textbf v}_\epsilon^k+F_i^\epsilon(\textit{\textbf z}_\epsilon) \,
(\textit{\textbf v}_\epsilon^k-\textit{\textbf v}_\epsilon),
\end{eqnarray*}
by Proposition \ref{Proposition 5.3}, the regularity of $F_i^\epsilon(\cdot,\cdot)$, and the Schwarz's inequality,
\begin{eqnarray}\label{eq97}
&&\int_0^T(F_i^\epsilon(\textit{\textbf z}_\epsilon^k)\,\textit{\textbf v}_\epsilon^k
-F_i^\epsilon (\textit{\textbf z}_\epsilon)\,\textit{\textbf v}_\epsilon,
\textit{\textbf w}^j)\,\phi(t)\, dt \nonumber \\
&&\hspace{.5cm}\leq C\int_0^T\|F_i^\epsilon(\textit{\textbf z}_\epsilon^k)- F_i^\epsilon
(\textit{\textbf z}_\epsilon)\|_\infty \|\textit{\textbf v}_\epsilon^k\,(t)\|\, dt\nonumber\\
&&\hspace{1cm} +C\int_0^T \|F_i^\epsilon(\textit{\textbf z}_\epsilon)\|_\infty
\|\textit{\textbf v}_\epsilon^k\,(t)-\textit{\textbf v}_\epsilon\,(t)\|\,dt, \nonumber \\
&&\hspace{.5cm}\leq  C\int_0^T\|F_i^\epsilon(\textit{\textbf z}_\epsilon^k)- F_i^\epsilon
(\textit{\textbf z}_\epsilon) \|_\infty dt\nonumber\\
&&\hspace{1cm} +C(T,\epsilon)\,(\int_0^T
\|\textit{\textbf v}_\epsilon^k\,(t)-\textit{\textbf v}_\epsilon\,(t)\|^2\,dt)^{1/2}.
\end{eqnarray}
Therefore, from (\ref{eq86})-(\ref{eq91}), (\ref{eq93})-(\ref{eq97}) and Lemma \ref{Lemma 5.7}, we conclude 
that the system (\ref{eq51})-(\ref{eq54}) converges to the system (\ref{eq46})-(\ref{eq49}).
Thus, the proof of the Theorem \ref{Theorem 5.1} is complete. 

\section{Reproductive weak solution of the solidification problem}
In this section we will prove our main result.

\begin{theorem}\label{Theorem 6.1} 
Under the hypotheses ({\bf S$_1$}), ({\bf S$_2$}) and ({\bf S$_3$}), the problem (\ref{eq8})-(\ref{eq10c})
admits at least one weak reproductive solution.
\end{theorem}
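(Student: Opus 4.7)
The plan is to obtain the weak reproductive solution of (\ref{eq8})--(\ref{eq10c}) as the limit, along a subsequence $\epsilon \to 0^+$, of the reproductive weak solutions $(c_\epsilon, \theta_\epsilon, \textit{\textbf v}_\epsilon)$ of the regularized problem (\ref{eq30})--(\ref{eq32c}) produced by Theorem \ref{Theorem 5.1}. First I would note that the a priori estimate in Proposition \ref{Proposition 5.3} is uniform in both $k$ and $\epsilon$, so passing to the limit in $k$ inherits a bound independent of $\epsilon$:
\[
\|c_\epsilon\|_{L^\infty(L^2)} + \|\theta_\epsilon\|_{L^\infty(L^2)} + \|\textit{\textbf v}_\epsilon\|_{L^\infty(\textit{\textbf H})} + \|c_\epsilon\|_{L^2(H^1)} + \|\theta_\epsilon\|_{L^2(H^1)} + \|\textit{\textbf v}_\epsilon\|_{L^2(\textit{\textbf H}_v)} \leq C.
\]
The maximum principle bounds (\ref{eq44})--(\ref{eq45}) also pass with the same constants. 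Testing (\ref{eq35}) against $\textit{\textbf v}_\epsilon$ yields the crucial extra uniform bound $\int_0^T \!\int_\Omega F_i^\epsilon(c_\epsilon,\theta_\epsilon)|\textit{\textbf v}_\epsilon|^2\,dx\,dt \leq C$, which will be used to recover the constraint $\textit{\textbf v} = 0$ on $\Omega_s$.

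Next I would extract a subsequence with $c_\epsilon \rightharpoonup c$, $\theta_\epsilon \rightharpoonup \theta$ weakly in $L^2(H^1)$ and weakly-$*$ in $L^\infty(L^2)$, and $\textit{\textbf v}_\epsilon \rightharpoonup \textit{\textbf v}$ weakly in $L^2(\textit{\textbf H}_v)$ and weakly-$*$ in $L^\infty(\textit{\textbf H})$. To deal with the nonlinearities I need strong convergence, which I would obtain by an Aubin--Lions argument: bounds on $\partial_t c_\epsilon$, $\partial_t \theta_\epsilon$ in $L^{4/3}((H^1)')$ and on $\partial_t \textit{\textbf v}_\epsilon$ in $L^{4/3}(\textit{\textbf H}_v')$ follow by duality from the variational equations (\ref{eq33})--(\ref{eq35}), exactly as in the equicontinuity estimates of Lemma \ref{Lemma 5.6}, noting that on the support of any admissible test function $\textit{\textbf w}\in\textit{\textbf H}_v$ with $\mathrm{supp}\,\textit{\textbf w}\subset\Omega_{ml}$ the quantity $F_i^\epsilon$ stays uniformly bounded. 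Compactness then gives $c_\epsilon \to c$, $\theta_\epsilon \to \theta$ in $L^2(L^2)$ and $\textit{\textbf v}_\epsilon \to \textit{\textbf v}$ in $L^2(\textit{\textbf H})$, and a.e.\ convergence along a further subsequence.

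Passing to the limit in the linear terms and in $\mathbf{F}_e(c_\epsilon,\theta_\epsilon)$ is then routine from ({\bf S$_3$}); the convective terms are handled as in the end of the proof of Theorem \ref{Theorem 5.1}. The reproductive identities $c(\cdot,0)=c(\cdot,T)$, $\theta(\cdot,0)=\theta(\cdot,T)$, $\textit{\textbf v}(\cdot,0)=\textit{\textbf v}(\cdot,T)$, together with the integral constraint (\ref{eq19}) and the boundary condition (\ref{eq20}), are preserved because the corresponding traces at $t=0,T$ converge weakly in $L^2$, the maps $c_\epsilon,\theta_\epsilon,\textit{\textbf v}_\epsilon$ being continuous in time into $L^2$ by Aubin--Lions.

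The main obstacle, and the only place where the structure of the problem really matters, is the limit of the Carman--Kozeny term and the recovery of $\textit{\textbf v}=0$ on $\Omega_s\times(0,T)$. For test functions $\textit{\textbf w}\in\textit{\textbf H}_v$ with $\mathrm{supp}\,\textit{\textbf w}\subset\Omega_{ml}$, I would argue that $f_s(c_\epsilon,\theta_\epsilon)\to f_s(c,\theta)$ a.e.\ by a.e.\ convergence and continuity of $f_s$ away from the interfaces (which have measure zero), that $1-f_s(c,\theta)+\epsilon$ stays bounded away from $0$ on compact subsets of $\Omega_{ml}$, and hence $F_i^\epsilon(c_\epsilon,\theta_\epsilon)\to F_i(c,\theta)$ in $L^\infty$ on $\mathrm{supp}\,\textit{\textbf w}$; combined with the strong $L^2$ convergence of $\textit{\textbf v}_\epsilon$, this delivers convergence of the term $(F_i^\epsilon(c_\epsilon,\theta_\epsilon)\textit{\textbf v}_\epsilon,\textit{\textbf w})$ to $(F_i(c,\theta)\textit{\textbf v},\textit{\textbf w})$. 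To obtain the constraint (\ref{eq21}) I would use that on $\{(c_\epsilon,\theta_\epsilon)\in\Theta_s\}$ one has $F_i^\epsilon=C_0/\epsilon^3$, so the uniform bound $\int_0^T\!\int_\Omega F_i^\epsilon|\textit{\textbf v}_\epsilon|^2\leq C$ forces $\textit{\textbf v}_\epsilon\to 0$ in $L^2$ on that set; then a.e.\ convergence of $(c_\epsilon,\theta_\epsilon)$ together with Fatou's lemma identifies the limit: $\textit{\textbf v}=0$ a.e.\ on $\{(c,\theta)\in\Theta_s\}=\Omega_s\times(0,T)$. This step, together with the fact that the limit is only tested against $\textit{\textbf w}$ supported in $\Omega_{ml}$, is what allows the hypersingular Carman--Kozeny term to disappear in the solid phase and reappear integrably in the mixture, completing the proof.
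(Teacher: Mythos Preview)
Your strategy is essentially that of the paper: pass to the limit $\epsilon\to 0$ in the regularized reproductive solutions of Theorem~\ref{Theorem 5.1}, using the $\epsilon$-uniform bounds of Proposition~\ref{Proposition 5.3} for weak compactness, a strong-$L^2$ compactness argument for the nonlinear terms, uniform convergence of $F_i^\epsilon$ on compact subsets of $\Omega_{ml}$ for the Carman--Kozeny term, and the dissipation bound $\int_0^T\!\int_\Omega F_i^\epsilon|\textit{\textbf v}_\epsilon|^2\le C$ to force $\textit{\textbf v}=0$ on $\Omega_s$. The only real differences are cosmetic: you invoke Aubin--Lions where the paper reuses the Friedrich/Arzel\`a--Ascoli machinery of Lemmas~\ref{Lemma 5.6}--\ref{Lemma 5.7}, and you phrase the $\Omega_s$ argument via Fatou on the set $\{(c_\epsilon,\theta_\epsilon)\in\Theta_s\}$ where the paper works on fixed compacts $K\subset\Omega_s$ with $f_s(c_\epsilon,\theta_\epsilon)=1$ for $\epsilon$ small.

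One point deserves care. Your claim that $\partial_t\textit{\textbf v}_\epsilon$ is bounded in $L^{4/3}(\textit{\textbf H}_v')$ uniformly in $\epsilon$, justified by ``$F_i^\epsilon$ stays uniformly bounded on $\mathrm{supp}\,\textit{\textbf w}\subset\Omega_{ml}$'', is circular as written: $\Omega_{ml}$ is defined through the limit $(c,\theta)$, which at that stage has not yet been constructed, and restricting the test class does not give a bound in the full dual $\textit{\textbf H}_v'$. The paper sidesteps this by first obtaining the limits $c,\theta$ (whose equations contain no $F_i^\epsilon$ and hence enjoy genuinely $\epsilon$-uniform time-derivative control), thereby fixing $\Omega_{ml}$, and only then arguing locally on compacts $K\subset\Omega_{ml}$ where $f_s(c_\epsilon,\theta_\epsilon)\le 1-\delta/2$ for small $\epsilon$, so that $F_i^\epsilon$ is bounded and strong convergence of $\textit{\textbf v}_\epsilon$ on $K$ follows; this local strong convergence is all that is needed since~(\ref{eq17}) is only tested against $\textit{\textbf w}$ with $\mathrm{supp}\,\textit{\textbf w}\subset\Omega_{ml}$. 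Reordering your argument in this way removes the circularity.
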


\begin{proof} From Theorem \ref{Theorem 5.1} and Proposition \ref{Proposition 5.3}, for $\epsilon \in (0,1]$, 
any reproductive weak solution $(c_\epsilon, \theta_\epsilon, \textit{\textbf v}_\epsilon)$ of the problem 
(\ref{eq30})-(\ref{eq32c}) is uniformly bounded with respect to $\epsilon$. Then, similarly to Lemma \ref{Lemma 5.5} 
and Lemma \ref{Lemma 5.7}, we can conclude that there exists $(c ,\theta,\textit{\textbf v})$ and a sequence, 
still indexed by $\epsilon$, such that
\begin{eqnarray}
&&c_\epsilon \ \rightarrow c,  \quad \theta_\epsilon \ \rightarrow \theta \quad \mbox{ weakly in } 
L^2(H^1), \ \mbox{ strongly in } L^2(L^2), \label{seq1}\\
&&\textit{\textbf v}_\epsilon \ \rightarrow \textit{\textbf v} \quad  \mbox{ weakly in } L^2(\textit{\textbf H}_v), 
\mbox{ strongly in } L^2(\textit{\textbf H} ). \label{seq2}
\end{eqnarray}
We check now that $(c, \theta, \textit{\textbf v})$ is a weak reproductive solution to problem (\ref{eq8})-(\ref{eq10c}).

Let $\textit{\textbf{w}} \in \textit{\textbf C}^\infty_0(\Omega)$ with compact support 
$K \subset \Omega_{ml}$. Then, for an certain $\delta > 0$ and for every $x \in K$,
\begin{equation}\label{eq98}
 f_s(c(x,t),\theta(x,t)) < 1 - \delta \quad \mbox{ a.e. in } \ (0,T),
\end{equation}
and this implies that
\begin{equation}\label{eq99}
\|F_i(c(t),\theta(t))\|_{L^\infty(K)} < C(\delta) \quad \mbox{ a.e. in } \ (0,T).
\end{equation}

Due to the uniform convergence of $f_s(c_\epsilon,\theta_\epsilon)$ toward $f_s(c, \theta)$ for any 
compact subset of $\Omega_{ml}$, given $\frac{\delta}{2}$ there exist an $\epsilon_\delta$ such
that $\forall \,\epsilon \in (0,\epsilon_\delta)$ and $\forall \, x \in K$,
\begin{equation}\label{eq100}
|f_s(c_\epsilon(x,t),\theta_\epsilon(x,t))-f_s(c(x,t),\theta(x,t)| < \frac{\delta}{2},
\end{equation}
thus, from (\ref{eq100}) and (\ref{eq98}), $ \forall \, x \in K$ and a.e. in $(0,T)$ we have
\begin{equation}\label{eq101}
f_s(c_\epsilon(x,t),\theta_\epsilon(x,t)) < \frac{\delta}{2}+ f_s(c(x,t), \theta(x,t) < 1- \frac{\delta}{2}.
\end{equation}
Consequently, considering (\ref{eq5}), (\ref{eq29}), (\ref{eq98}) and (\ref{eq101}), when $\epsilon \rightarrow 0$,
\begin{equation}\label{eq102}
F_i^\epsilon(c_\epsilon(x,t),\theta_\epsilon(x,t)) \longrightarrow
F_i (c(x,t),\theta(x,t)) \ \mbox{in } C^0(K) \ \mbox{ a.e. in } \ (0,T).
\end{equation}
Then, taking into account (\ref{eq99}) and similarly as (\ref{eq97}), we have
\begin{eqnarray}\label{eq103}
&&\int_0^T(F_i^\epsilon(c_\epsilon,\theta_\epsilon)\,
\textit{\textbf v}_\epsilon\,(t)- F_i (c, \theta)\,\textit{\textbf v}
\,(t), \,\textit{\textbf w}^i)\,\phi(t)\, dt \nonumber \\
&&\hspace{.5cm}\leq C \int_0^T \|F_i^\epsilon(c_\epsilon,\theta_\epsilon)
- F_i (c, \theta)\|_{L^\infty(K)}dt\nonumber\\
&&\hspace{1cm}+C(\delta)(\int_0^T\|\textit{\textbf v}_\epsilon\,(t) 
-\textit{\textbf v}\,(t)\|^2 \,dt)^{1/2}.
\end{eqnarray}
Therefore, using (\ref{eq102}), (\ref{eq103}), (\ref{seq1}), (\ref{seq2}), and passing to the limit 
in the equations (\ref{eq33})-(\ref{eq35}), we obtain (\ref{eq15})-(\ref{eq17}), where by density 
the equation (\ref{eq17}) holds for any $\textit{\textbf w} \in \textit{\textbf H}_v$ with
supp $\textit{\textbf w} \subset \Omega_{ml}$.

To check that $\textit{\textbf{v}}=0$ in $\Omega_s \times (0,T)$ take a compact set $K \subset \Omega_s$. 
Since the solid domain is open, there exists an $\epsilon_K > 0$ such that
$\,f_s(c_\epsilon(x,t),\theta_\epsilon(x,t)) = 1$ if $x \in K$  a.e. in $(0,T)$, whenever  
$\epsilon \in (0,\epsilon_K)$, and this implies that
\begin{equation}\label{eq104}
F_i^\epsilon(c_\epsilon(x,t),\theta_\epsilon(x,t))= \frac{C_0}
{\epsilon^3}\quad \mbox{in } \, x \in K, \ \mbox{ a.e. in } \ (0,T).
\end{equation}

Choosing $\textit{\textbf w} = \textit{\textbf v}_\epsilon$ in (\ref{eq35}),  observing (\ref{eq66}) 
and (\ref{eq104}), we have
\begin{eqnarray}\label{eq105}
&&\frac{1}{2}\frac{d}{dt} \|\textit{\textbf v}_\epsilon(t)\|^2
+\frac{3\,\nu}{4} \|\nabla \textit{\textbf v}_\epsilon(t)\|^2
+\frac{C_0}{\epsilon^3}\,\|\textit{\textbf v}_\epsilon(t)\|^2_{L^2(K)}\leq C. 
\end{eqnarray}
By integrating (\ref{eq105}) respect to $t$ from $0$ to $T$, and since
$\textit{\textbf v}_\epsilon(0)=\textit{\textbf v}_\epsilon(T)$, we get
\begin{equation}\label{eq106}
\frac{C_0}{\epsilon^3}\, \int_0^T \|\textit{\textbf v}_\epsilon(t)\|^2_{L^2(K)}\,dt \leq C\, T,
\end{equation}
where $C$ is independent of $\,\epsilon$. Thus, as $ \,\epsilon$ vanishes $\displaystyle \,
\frac{C_0}{\epsilon^3} \rightarrow \infty$, then (\ref{eq106}) implies that
$\displaystyle \, \int_0^T \|\textit{\textbf v}_\epsilon(t)\|^2_{L^2(K)}
\,dt$ to converge to $0$ and consequently $\|\textit{\textbf v}_\epsilon(t)\|_{L^2(K)}$ converge 
to $0$. Therefore, $\textit{\textbf v}=0$ in $K$ a.e. in $(0,T)$, and the arbitrary choice of 
$K$ implies $\textit{\textbf v}= 0$ in $\Omega_s \times (0,T)$. Thus, we complete the proof 
of the Theorem \ref{Theorem 6.1}. 
\end{proof}

%


\begin{thebibliography}{99}
\bibitem{Ah} Ahmad, N.:  Numerical simulation of transport processes in
multicomponent systems related to solidification problems,
Ph.D.Thesis DMA-EPFL N$^{\underline\circ}$ 1349 (1995).

\bibitem{Ba-Du-Or} Badillo, A., Dur\'an, M., Ortega-Torres, E.:
C\'alculo de inestabilidades de un proceso de solidificaci\'on en
dominios de simetr\'{\i}a cil\'{\i}ndrica, Parte II: Resultados Numericos.
Revista Internacional de M\'etodos Num\'ericos para C\'alculo y Dise\~no
en Ingenier\'{\i}a, Vol. 21, 4, 307-325 (2005).

\bibitem{Bl-Gas-Ra} Blanc, Ph., Gasser, L., Rappaz, J.: Existence for
a Stationary Model of Binary Alloy Solidification. Mathematical
Modelling and Numerical Analysis,  Vol. 29, 6, 687-699 (1995).

\bibitem{Co-Le} Combeau, H., Lesoult, G.:  Simulation of freckles
formation and related segregation during directional solidification
of metallic alloys. Modelling of casting, welding and advanced
solidification processes 6, Voller V., Piwonka T.S.
\& Katgerman L. eds., 201-208 (1993).

\bibitem{Du-Or-Ra} Dur\'an, M., Ortega-Torres, E., Rappaz, J.:
Weak Solution of a Stationary Convection-Diffusion Model Describing
Binary Alloy Solidification Processes. Mathematical and Computer
Modelling, Vol. 42, 11/12, 1269-1286 (2005).

\bibitem{Du-Or-St} Dur\'an, M., Ortega-Torres, E., Stein, R.:
C\'alculo de inestabilidades de un proceso de solidificaci\'on en
dominios de simetr\'{\i}a cil\'{\i}ndrica, Parte I: Formulaci\'on
del modelo. Revista Internacional de M\'etodos Num\'ericos para
C\'alculo y Dise\~no en Ingenier\'{\i}a, Vol. 17, 2,
127-148 (2001).

\bibitem{Er} Eringen, A.: Theory of thermo-microstretch fluids and bubbly liquids, Int. J. Engng. Sci. 28, 133-143, (1990).

\bibitem{Ga} Gaillard, F.: Mod\'elisation et analyse num\'erique
d'instabilit\'es d\'evelop\'ees lors de la solidification
d'alliages binaires, Ph.D. Tesis DMA-EPFL, N$^{\underline\circ}$
1868 (1998).

\bibitem{Ga-Ra} Gaillard, F.,  Rappaz, J.: Analysis and
numerical simulation for models of binary alloy solidification,
Periaux Contributed Book, John Wiley \& Sons Ltd (1997).

\bibitem{Gas} Gasser, L.: Existence analysis and numerical
schemes for models of binary alloy solidification, Ph.D.Thesis
DMA-EPFL N$^{\underline\circ}$ 1421 (1995).

\bibitem{Hi-Lo-Ro} Hills, R.N., Loper, D.E., Roberts, P.H.:  A thermodynamically consistent model of a mushy zone. Quarterly
J. Mech. Appl. Math., 36, 505-539 (1983).

\bibitem{La} Ladyszhenskaya, O.A.: The mathematical theory of viscous
incompressible flow, Second edition, Gordon and Breach, New York (1969).

\bibitem{Mil} Milne-Thomson, L.M.: Theoretical hydrodinamics. Dover Publications, Inc., New York 1996.

\bibitem{Ra-Vo} Rappaz, M., Voller, V.R.:  Modelling of
micro-macrosegregation in solidification processes. Metall. Trans.
A-Physical Metallurgy and Materials, Vol. 21, 3, 749-753 (1990).

\bibitem{Sha}  Shapiro, A.H.: The dynamics and thermodynamics of compressible flow. Ronald Press, New
York 1953.

\bibitem{Vo-Pr} Voller, V.R., Prakash, C.: A fixed
grid numerical modelling methodology for convection-diffusion
mushy region phase-change problems. Int. J. Heat Mass Transfer,
30, 1709-1719 (1987).
\end{thebibliography}
\end{document}